\numberwithin{equation}{section}
\theoremstyle{plain}
\newtheorem{theorem}{Theorem}[section]
\newtheorem{lemma}[theorem]{Lemma}
\newtheorem{proposition}[theorem]{Proposition}
\theoremstyle{definition}
\theoremstyle{remark}
\newtheorem{claim}[theorem]{Claim}
\newtheorem{case[theorem]}{Case}
\definecolor{blue}{rgb}{0,0,1}
\definecolor{red}{rgb}{1,0,.2}
\DeclareSymbolFont{extraup}{U}{zavm}{m}{n}
\DeclareMathSymbol{\varheart}{\mathalpha}{extraup}{86}
\DeclareMathSymbol{\vardiamond}{\mathalpha}{extraup}{87}
\newcommand{\DD}{\mathcal{D}}
\date{{\today}}
\newcommand{\ii}{{\mathbf{i}}}
\newcommand{\jj}{{\mathbf{j}}}
\newcommand{\N}{{\mathbb{N}}}
\newcommand{\R}{{\mathbb{R}}}
\newcommand{\var}{{\mathrm{var}}}
\begin{document}

\pagestyle{myheadings}

\title[Dimension theory of dynamically defined function graphs]{Dimension Theory of some non-Markovian repellers Part II: Dynamically defined function graphs}

\author{Bal\'azs B\'ar\'any}
\address[Bal\'azs B\'ar\'any]{Budapest University of Technology and Economics, Department of Stochastics, MTA-BME Stochastics Research Group, P.O.Box 91, 1521 Budapest, Hungary}
\email{balubsheep@gmail.com}

\author{Micha\l\ Rams}
\address[Micha\l\ Rams]{Institute of Mathematics, Polish Academy of Sciences, ul. \'Sniadeckich 8, 00-656 Warszawa, Poland}
\email{rams@impan.pl}

\author{K\'aroly Simon}
\address[K\'aroly Simon]{Budapest University of Technology and Economics, Department of Stochastics, Institute of Mathematics, 1521 Budapest, P.O.Box 91, Hungary} \email{simonk@math.bme.hu}

\subjclass[2010]{Primary 28A80 Secondary 28A78}
\keywords{Self-affine measures, self-affine sets, Hausdorff dimension.}
\thanks{The research of B\'ar\'any and Simon was partially supported by the grant OTKA K123782. B\'ar\'any acknowledges support also from NKFI PD123970 and the J\'anos Bolyai Research Scholarship of the Hungarian Academy of Sciences. Micha\l\ Rams was supported by National Science Centre grant 2014/13/B/ST1/01033 (Poland). This work was partially supported by the  grant  346300 for IMPAN from the Simons Foundation and the matching 2015-2019 Polish MNiSW fund.}

\begin{abstract}
This is the second part in a series of two papers. Here, we give an overview on the dimension theory of some dynamically defined function graphs, like Takagi and Weierstrass function, and we study the dimension of markovian fractal interpolation functions and generalised Takagi functions generated by non-Markovian dynamics.
\end{abstract}

\date{\today}
\maketitle


\section{The Weierstrass and Takagi functions}

The study of the geometric properties of the graphs of real functions goes back to the 19th century. Karl Weierstrass introduced in 1872 a function, which is continuous but nowhere differentiable. That was one of the first examples of such functions and for nowadays, became a famous example:
\begin{equation}\label{eq:weier}
W_{\alpha,b}(x)=\sum_{n=0}^\infty\alpha^n\cos(2\pi b^nx),
\end{equation}
where $b>1$ and $1/b<\alpha<1$. In fact, Weierstrass proved the non-differentiability for some values of parameters, and the proof for all parameters was given by Hardy \cite{Hardy} in 1916.

Teiji Takagi \cite{takagi} published his simple example of a continuous but nowhere differentiable function in 1901,
\begin{equation}\label{eq:takagi}
T(x)=\sum_{n=0}^\infty 2^{-n}\psi(2^nx),
\end{equation}
where $\psi(x)=\mathrm{dist}(x,\mathbb{Z})$. Unlike for the Weierstrass function, it is easy to show that $T$ has at no point a finite derivative, which proof is due to  Billingsley \cite{Billi}. For further properties and historical background of the functions above, see the survey papers of Allaart and Kawamura \cite{AllKawa} and Bara\'nski~\cite{baranski}.

Later, starting from the work of Besicovitch and Ursell \cite{BesUr}, the graphs of $W_{\alpha,b}$ and related functions were studied from a geometric point of view as fractal curves in the plane. In general, let
\begin{equation}\label{eq:gen}
G_{\alpha,b}(x)=\sum_{n=0}^\infty \alpha^{n}\phi(b^nx)
\end{equation}
for $x\in\mathbb{R}$, where $b\in\N$, $1/b<\alpha<1$ and $\phi\colon\mathbb{R}\mapsto\mathbb{R}$ is a non-constant $\mathbb{Z}$-periodic Lipschitz continuous piecewise $C^1$ function. Kaplan, Mallet-Paret and Yorke~\cite{KMY} proved that a function of the form \eqref{eq:gen} is either piecewise $C^1$ smooth or the box dimension of the graph is equal to
\begin{equation}\label{eq:lyapdimspec}
D=2+\frac{\log\alpha}{\log b}.
\end{equation}
This fact is related to the H\"older continuity of the function $G_{\alpha,b}$. In fact, if the function $g\colon[0,1]\mapsto\mathbb{R}$ is H\"older continuous with exponent $\alpha$ then
$$
\overline{\dim}_B\{(x,g(x)): x\in[0,1]\}\leq 2-\alpha.
$$
For instance, the case of smoothness of $G_{\alpha,b}$ happens if $\phi(x)=\alpha h(b x)-h(x)$ for some smooth function $h$.

The problem of determining the value of the Hausdorff dimension turned out to be much more complicated. Mandelbrot formulated the conjecture in 1977 \cite{Mandel} that the Hausdorff dimension of the graph of $W_{\alpha,b}$ equals to $D$, but this has been solved only recently.

Ledrappier \cite{Ledrapp} gave a sufficient condition in order to determine the Hausdorff dimension of the graph. In details, let $\underline{\xi}=\{\xi_i,i=1,2,\ldots\}$ be a sequence of independent Bernoulli variables with values $0,\ldots,b-1$ and with probabilities $\mathbb{P}(\xi_j=k)=1/b$. If the distribution of the random variable
\begin{equation}\label{eq:condled}
Y_x(\underline{\xi})=\sum_{n=1}^\infty(b\alpha)^{-n}\phi'\left(\frac{x}{b^n}
+\frac{\xi_1}{b^n}+\frac{\xi_2}{b^{n-1}}+\cdots++\frac{\xi_n}{b}\right)
\end{equation}
has dimension $1$ for Lebesgue almost every $x$ then $$\dim_H\{(x,G_{\alpha,b}(x)):x\in[0,1]\}=D.$$
This condition relies on the so-called Ledrappier-Young formula.

Altough, for the first sight this condition may seem very restrictive, it turned out that it widely applicable. In the case of Weierstrass functions \eqref{eq:weier}, Bara\'nski, B\'ar\'any and Romanowska \cite{BBR} showed that for every $b\geq2$ integers there exists $\alpha_b\in[1/b,1)$ such that for every $\alpha\in(\alpha_b,1)$,
$$
\dim_H\{(x,W_{\alpha,b}(x)):x\in[0,1]\}=D.
$$
Recently, Shen \cite{Shen} proved that $\alpha_b=1/b$.

In the case of Takagi function, the distribution of the random variable $Y_x(\underline{\xi})$ is independent of $x$ and it is the Bernoulli convolution, related to Erd\H{o}s' problem \cite{Erdos1,Erdos2}. For simplicity denote $T_{\alpha}$ the function $G_{\alpha,2}$ with $\psi(x)=\mathrm{dist}(x,\mathbb{Z})$. It is easy to see that $Y_x(\underline{\xi})=\sum_{n=0}^\infty(\delta_{\xi_n,0}-\delta_{\xi_n,1})(2\alpha)^{-n}$ in \eqref{eq:condled}, where $\delta_{i,j}=1$ if $i=j$ and $0$ otherwise. Using this phenomena, Solomyak \cite{Solomyak98} showed that for Lebesgue almost every $\alpha\in(1/2,1)$,
\begin{equation}\label{eq:takadim}
\dim_H\{(x,T_{\alpha}(x)):x\in[0,1]\}=D.
\end{equation}
Applying the result of Hochman \cite{hochman2012self},
\cite[Theorem~4.11]{BaRaSiKochin1}, there exists a set $E\subset(1/2,1)$ such that $\dim_HE=0$ and for every $\alpha\in(1/2,1)\setminus E$, \eqref{eq:takadim} holds. Recently, Varj\'u \cite{varju2018dimension} showed that the distribution of $Y(\underline{\xi})$ has dimension $1$ if $(2\alpha)^{-1}$ is a transcendental number (which is transcendental if and only if $\alpha$ is transcendental), and hence \eqref{eq:takadim} holds.

However, it is well known that for Pisot numbers (for instance $(2\alpha)^{-1}=(\sqrt{5}-1)/2$ the golden ratio) the distribution of $Y(\underline{\xi})$ is singular and has dimension strictly smaller than 1 and thus, Ledrappier's condition \eqref{eq:condled} cannot be applied. Recently, with different method, B\'ar\'any, Hochman and Rapaport \cite{BHR} proved that \eqref{eq:takadim} holds for every $\alpha\in(1/2,1)$.

\section{Dynamically defined function graphs}

Let $G_{\alpha,b}$ be the function defined in \eqref{eq:gen} with $b > 1$ integer, $1/b<\alpha<1$ and $\phi\colon\mathbb{R}\mapsto\mathbb{R}$ is a non-constant 1-periodic Lipschitz continuous piecewise $C^1$ function. It is easy to see that $G_{\alpha,b}$ satisfies certain self-similarity equation
\begin{equation}\label{eq:selfsim}
G_{\alpha,b}(x)=\alpha G_{\alpha,b}(bx)+\phi(x).
\end{equation}
Since $\phi$ is $1$-periodic and thus, $G_{\alpha,b}$ as well, \eqref{eq:selfsim} implies that $\mathrm{graph}(G_{\alpha,b})=\{(x,G_{\alpha,b}(x)):x\in[0,1]\}$ is invariant with respect to the dynamics
$$
F(x,y)=\left(bx\mod1,\frac{y-\phi(x)}{\alpha}\right)\text{ for }(x,y)\in[0,1]\times\mathbb{R},
$$
and $\{F^n(x,y)\}$ is bounded if and only if $y=G_{\alpha,b}(x)$.

One can define the local inverses of $F$ such that
$$
\tilde{F}_i(x,y)=\left(\frac{x+i}{b},\alpha y+\phi\left(\frac{x+i}{b}\right)\right)\text{ for }i=0,\ldots,b-1.
$$
Hence, $\mathrm{graph}(G_{\alpha,b})=\bigcup_{i=0}^{b-1}\tilde{F}_i(\mathrm{graph}(G_{\alpha,b}))$. For a visualisation of the local inverses in the cases of $W_{1/2,3}$ and $T_{2/3}$, see Figure~\ref{fig:takwei}.

\begin{figure}\label{fig:takwei}
  \centering
  \includegraphics[width=6cm]{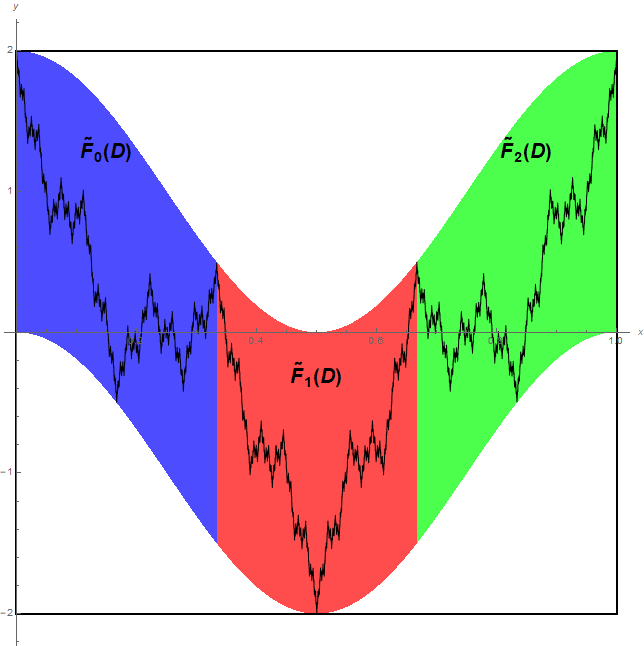}\qquad
\includegraphics[width=6cm]{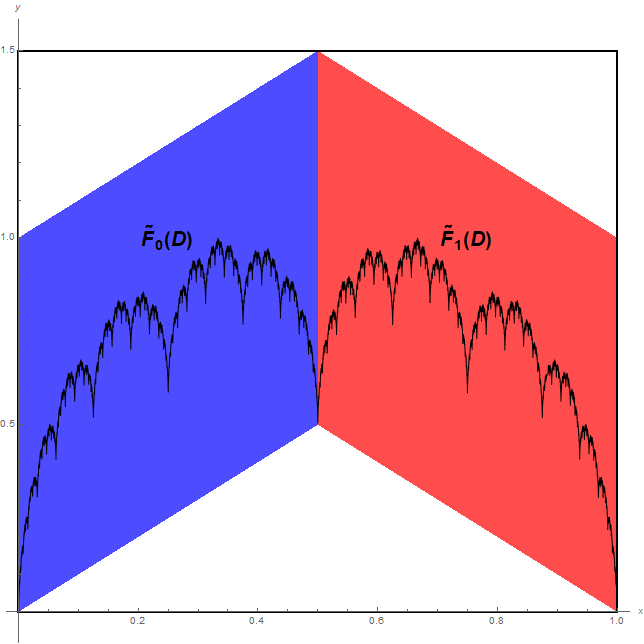}
\caption{The graph of $W_{1/2,3}$ and $T_{2/3}$ as repellers.}
\end{figure}

Observe that for the Takagi function $T_\alpha$, the function $\phi$ is piecewise linear, moreover, the singularity occur exactly at $x=1/2$. Thus, $\mathrm{graph}(T_\alpha)$ is a self-affine set, see
\cite[Definition~6.1]{BaRaSiKochin1}, with IFS
$$
\left\{\tilde{F}_0(\underline{x})=\begin{pmatrix}
                   \frac{1}{2} & 0 \\
                   1 & \alpha
                 \end{pmatrix}\underline{x}, \tilde{F}_1(\underline{x})=\begin{pmatrix}
                   \frac{1}{2} & 0 \\
                   -1 & \alpha
                 \end{pmatrix}\underline{x}+\begin{pmatrix}
                                              \frac{1}{2} \\
                                              1
                                            \end{pmatrix}\right\}
$$
formed by lower triangular matrices.

A wider family of continuous functions, which are attractors of affine IFS, is the fractal interpolation functions, introduced by Barnsley \cite{Barnsley}. Let a data set $\Delta=\{(x_i, y_i)\in[0,1]\times\mathbb{R}: i=0,1,\ldots,m\}$ be given so that $0=x_0<x_1<\cdots<x_{m-1}<x_m=1$. We concern the graphs of continuous functions $G\colon[0,1]\mapsto\mathbb{R}$, which interpolate the data according to $G(x_i)=y_i$ for $i\in\{0, 1,\ldots,m\}$, and $\mathrm{graph}(G)$ is the attractor of an IFS, which contains only affine transformations with lower triangular matrices. That is,
$$
\left\{\tilde{F}_i\binom{x}{y}=\binom{(x_i-x_{i-1})x+x_{i-1}}{(y_i-y_{i-1}-\alpha_i(y_m-y_0))x+\alpha_iy+y_{i-1}-\alpha_iy_0}\right\}_{i=1}^m
$$
where $\alpha_i\in(-1,1)\setminus\{0\}$ are free parameters for $i=1,\ldots,m$. In other words, the interpolation function $G$ is the repeller of the piecewise linear, expanding map $F$, where $F(x,y)=F_i(x,y)$ if $x_{i-1}<x<x_i$ and
\begin{equation}\label{eq:gendyn}
F_i(x,y)=\left(\frac{x-x_{i-1}}{x_i-x_{i-1}},\frac{y-(y_i-y_{i-1}-\alpha_i(y_m-y_0))\frac{x-x_{i-1}}{x_i-x_{i-1}}-y_{i-1}+\alpha_iy_0}{\alpha_i}\right).
\end{equation}
For a visualisation of a fractal interpolation function, see Figure \ref{fig:fracint}.

\begin{figure}\label{fig:fracint}
  \centering
  \includegraphics[height=6cm]{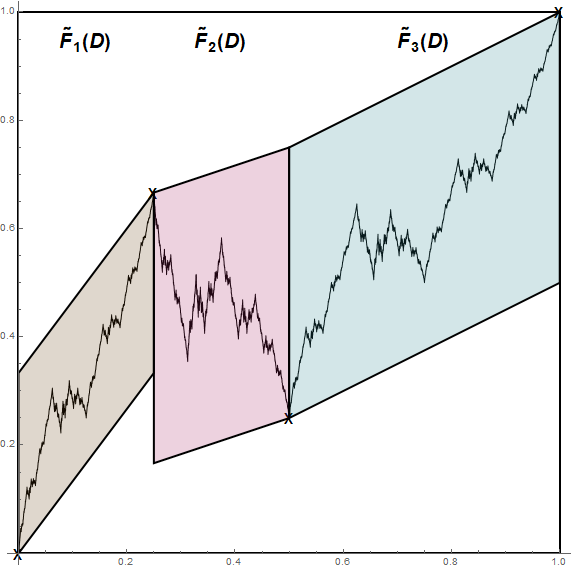}
\caption{The fractal interpolation function and its defining dynamics for $\Delta=\{(0,0),(1/4,2/3),(1/2,1/4),(1,1)\}$ and $\alpha_1=1/3, \alpha_2=-1/2$ and $\alpha_3=1/2$.}
\end{figure}

Note that if $\Delta$ is collinear then $G_{\underline{\alpha},\Delta}$ is a linear function and thus, its graph has dimension $1$. Thus, without loss of generality, the non-collinearity of $\Delta$ might be assumed without loss of generality.

Let us introduce the notation $G_{\underline{\alpha},\Delta}$, which denotes the fractal interpolation function for the data set $\Delta$ and free parameters $\underline{\alpha}\in\big((-1,1)\setminus\{0\}\big)^{|\Delta|-1}$.

Barnsley and Harrington \cite{BarnsleyHarrington} calculated the box dimension of $\mathrm{graph}(G)$ in a special case. Namely, when $x_i-x_{i-1}=1/m$ and $\alpha_i=\alpha$ for every $i=1,\ldots,m$ with $1/m<\alpha$, and the data is not situated on a line. Note that in this case the interpolation function corresponds to $G_{\alpha,m}$ in \eqref{eq:gen} with
\begin{multline}\label{eq:fracintphi}
\phi(x)=(y_i-y_{i-1}-\alpha(y_m-y_0))\left(mx+\frac{y_{i-1}-\alpha y_0}{y_i-y_{i-1}-\alpha(y_m-y_0)}-(i-1)\right)\\
\text{if }\frac{i-1}{m}\leq x<\frac{i}{m}.
\end{multline}
In this case,$$\dim_B\mathrm{graph}(G_{\alpha,m})=2+\frac{\log\alpha}{\log m}.$$

This result was later generalised by Bedford~\cite{Bedford} for general $\alpha_i$ but with the assumption that $x_i-x_{i-1}=1/m$ with $\alpha_i>1/m$ for every $i=1,\ldots,m$. Ruan, Su and Yao \cite{RuanSuYao} studied the box dimension in further generality. The complete characterization of the box counting dimension follows by Falconer and Miao \cite[Corollary~3.1]{FalconerMiao}. Namely, if $\Delta$ is not collinear then
$$
\dim_B\mathrm{graph}(G_{\underline{\alpha},\Delta})=\begin{cases}1 & \text{if }\sum_{i=1}^m|\alpha_i|\leq1\text{ and }\\
s &\text{if }\sum_{i=1}^m|\alpha_i|>1,
\end{cases}
$$
where $\sum_{i=1}^m|\alpha_i|(x_i-x_{i-1})^{s-1}=1$.

The following extension for the Hausdorff dimension follows by B\'ar\'any, Hochman and Rapaport \cite{BHR}.

\begin{theorem}
  Let the data set $\Delta=\{(x_i, y_i)\in[0,1]\times\mathbb{R}: i=0,1,\ldots,m\}$ be given so that $0=x_0<x_1<\cdots<x_{m-1}<x_m=1$. If $\sum_{i=1}^m|\alpha_i|>1$ and there exists $i\neq j$ such that
  \begin{equation}\label{eq:condforHD}
  \frac{y_i-y_{i-1}-\alpha_i(y_m-y_0)}{x_i-x_{i-1}-\alpha_i}\neq\frac{y_j-y_{j-1}-\alpha_j(y_m-y_0)}{x_j-x_{j-1}-\alpha_j}
  \end{equation}
  then
    $$
    \dim_H\mathrm{graph}(G_{\underline{\alpha},\Delta})=s,\text{ where }\sum_{i=1}^m|\alpha_i|(x_i-x_{i-1})^{s-1}=1.
    $$
\end{theorem}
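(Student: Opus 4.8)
The plan is to exhibit $\mathrm{graph}(G_{\underline{\alpha},\Delta})$ as the attractor of the self-affine IFS $\Phi=\{\tilde F_i(v)=A_iv+t_i\}_{i=1}^m$ with
\[
A_i=\begin{pmatrix} x_i-x_{i-1} & 0\\ y_i-y_{i-1}-\alpha_i(y_m-y_0) & \alpha_i\end{pmatrix},\qquad t_i=\begin{pmatrix} x_{i-1}\\ y_{i-1}-\alpha_iy_0\end{pmatrix},
\]
and to deduce the theorem from the planar self-affine dimension theory of B\'ar\'any, Hochman and Rapaport \cite{BHR}. Two structural features drive the argument. First, the first coordinates of the $\tilde F_i$ form the similarity IFS $x\mapsto(x_i-x_{i-1})x+x_{i-1}$ whose cylinders $[x_{i-1},x_i]$ tile $[0,1]$ with pairwise disjoint interiors; hence $\Phi$ satisfies the open set condition on the strip $(0,1)\times\mathbb{R}$, and since two distinct length-$n$ words first differ at some position where the corresponding first-coordinate cylinders are separated on the scale of their own contraction, the exponential separation condition needed for \cite{BHR} holds automatically. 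Second, every $A_i$ preserves the vertical line $\langle e_2\rangle$ with $e_2=(0,1)$, so $\Phi$ is a reducible planar system with common invariant direction $\langle e_2\rangle$.

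The upper bound $\dim_H\mathrm{graph}\le s$ is already contained in the box-dimension computation of Falconer and Miao quoted above, so the whole content is the matching lower bound, which I would obtain by producing a self-affine measure of dimension $s$. Here $s$ is the affinity (Falconer) dimension of $\Phi$: products of lower-triangular matrices are again lower triangular with diagonal $\bigl(\prod_k(x_{i_k}-x_{i_k-1}),\ \prod_k\alpha_{i_k}\bigr)$, so the singular-value function is, up to constants, multiplicative and the pressure equation collapses exactly to $\sum_i|\alpha_i|(x_i-x_{i-1})^{s-1}=1$. By the variational principle for the subadditive pressure this value is attained by the Bernoulli measure $\mu=\pi_*\mathbf{p}$ with $p_i=|\alpha_i|(x_i-x_{i-1})^{s-1}$: in the relevant regime where the vertical direction is dominant one computes $H(\mathbf p)-\lambda_{e_2}=(s-1)\lambda_{e_1}$ for the entropy and the two Lyapunov exponents, whence $\dim_L\mu=1+(H(\mathbf p)-\lambda_{e_2})/\lambda_{e_1}=s$. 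It therefore suffices to prove $\dim_H\mu=\min(\dim_L\mu,2)=\dim_L\mu=s$.

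This equality is exactly what \cite{BHR} delivers for planar self-affine measures, and condition \eqref{eq:condforHD} is precisely the non-degeneracy hypothesis it requires. Conjugating $A_i$ by the shear $(x,y)\mapsto(x,y-kx)$ turns its lower-left entry into $[y_i-y_{i-1}-\alpha_i(y_m-y_0)]-k[(x_i-x_{i-1})-\alpha_i]$, and these vanish for all $i$ simultaneously precisely when the ratios in \eqref{eq:condforHD} coincide. Thus \eqref{eq:condforHD} is equivalent to $\langle e_2\rangle$ being the \emph{unique} common invariant line of the $A_i$, i.e.\ to their not being simultaneously diagonalizable. When it fails, $\Phi$ is shear-conjugate to a diagonal system in which the vertical dynamics $y\mapsto\alpha_iy+\mathrm{const}$ decouples from $x$, the graph degenerates into a self-similar fibre structure, and its dimension obeys a Bedford--McMullen-type carpet formula rather than $s$; so the condition is genuinely necessary.

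The substantive obstacle --- the reason \cite{BHR} is needed rather than a soft Ledrappier--Young argument --- is ruling out a dimension drop in the conditional measures of $\mu$ along the vertical fibres. These conditionals are self-similar objects with ratios $\alpha_{i_k}$ whose translations are driven by the first-coordinate itinerary, and the hard step is to show, using Hochman's entropy-increase machinery for the self-similar part together with the fact that the first-coordinate projection is the full interval, that \eqref{eq:condforHD} forces them to carry their full conditional dimension. Granting this, the Ledrappier--Young formula gives $\dim_H\mu=\dim_L\mu=s$, so $\dim_H\mathrm{graph}(G_{\underline{\alpha},\Delta})\ge s$, which combined with the Falconer--Miao upper bound yields equality.
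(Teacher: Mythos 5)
Your proposal is correct and takes essentially the same route as the paper: the paper obtains this theorem as a direct consequence of the planar self-affine results of \cite{BHR}, using exactly the ingredients you assemble --- the graph is the attractor of the lower-triangular affine IFS $\{\tilde F_i\}$ satisfying the (strong) open set condition, condition \eqref{eq:condforHD} is equivalent to the matrices $DF_i$ not being simultaneously diagonalizable, the Falconer--Miao computation gives the upper bound, and the Bernoulli measure with weights $p_i=|\alpha_i|(x_i-x_{i-1})^{s-1}$ has Lyapunov dimension $s$, so that the \cite{BHR} equality of Hausdorff and Lyapunov dimension yields the lower bound. The only cosmetic difference is that the paper spells this argument out in its more general Markovian setting (Theorem~\ref{thm:markovhaus}), where the optimal measure is Markov rather than Bernoulli and must additionally be approximated by $n$-step Bernoulli measures, a step your full-shift situation lets you skip.
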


The assumption \eqref{eq:condforHD} is a little bit stronger than non-collinearity of $\Delta$. That is, if $\Delta$ is collinear then \eqref{eq:condforHD} does not hold. The condition \eqref{eq:condforHD} is equivalent with the condition that the matrices $\{DF_i\}_{i=1}^m$ are not simultaneously diagonalisable.

Note that \eqref{eq:condforHD} is a milder condition than Ledrappier's condition \eqref{eq:condled}. For example, suppose that the fractal interpolation function corresponds to a function of the form \eqref{eq:gen} with a $1$-periodic piecewise linear $\phi$. That is, the data set $\Delta=\{(\frac{i}{m},y_i):i=0,\ldots,m\}$, $y_0=y_m=0$ and $\alpha_1=\cdots=\alpha_m=\alpha$. Then $\phi$ is the piecewise linear function, connecting the data set $\Delta$, i.e.
$$
\phi(x)=(y_i-y_{i-1})(mx-(i-1))+y_{i-1}\text{ if }\frac{i-1}{m}\leq x<\frac{i}{m}
$$
for $i=1,\ldots,m$. Then \eqref{eq:condled} has the form
$$
Y(\underline{\xi})=m\sum_{n=1}^\infty(m\alpha)^{-n}(y_{\xi_n}-y_{\xi_{n}-1}),
$$
where $\{\xi_n\}$ are independent random variables with $\mathbb{P}(\xi_i=k)=1/m$ for $k=1,\ldots,m$. Ledrappier's condition requires that the distribution of the random variable $Y$ has dimension $1$ but the condition \eqref{eq:condforHD}, i.e. $y_i-y_{i-1}\neq y_j-y_{j-1}$ for some $i\neq j$, is equivalent to that the distribution of the random variable $Y$ has positive dimension.

\section{Markovian fractal interpolation functions}

Let $\Delta=\{(x_i, y_i)\in[0,1]\times\mathbb{R}: i=0,1,\ldots,m\}$ be given so that $0=x_0<x_1<\cdots<x_{m-1}<x_m=1$, and let $\alpha_i\in(-1,1)\setminus\{0\}$ for $i=1,\ldots,m$. The expanding dynamics, of which repeller is $\mathrm{graph}(G_{\underline{\alpha},\Delta})$, has a skew product form. That is, the map $F(x,y)$ has the form
\begin{equation}\label{eq:interpoldyn}
F(x,y)=F_i(x,y)=(f_i(x),g_i(x,y))\text{ for }x\in(x_{i-1},x_i).
\end{equation}
Thus, there is a base dynamics $f\colon[0,1]\mapsto[0,1]$, which is a piecewise linear, expanding interval map. In particular, each subinterval $(x_{i-1},x_i)$ is mapped to the complete interval $(0,1)$. A natural generalisation could be when the base dynamics $f$ is a Markovian expanding map with Markov partition $\{(x_{i-1},x_i):i=1,\ldots,m\}$.

That is, for every $i=1,\ldots,m$ let $0\leq\ell(i)<r(i)\leq m$ be integers such that $\gamma_i:=\frac{x_{r(i)}-x_{\ell(i)}}{x_i-x_{i-1}}>1$. Then let
$$
f(x)=f_i(x):=\frac{x_{r(i)}-x_{\ell(i)}}{x_i-x_{i-1}}(x-x_{i-1})+x_{\ell(i)}\text{ if }x\in(x_{i-1},x_i).
$$
By the choice of $\ell(i),r(i)$, the map $f$ is a piecewise linear expanding Markov map, see 
\cite[Definition~10.1]{BaRaSiKochin1}.

For each $i=1,\ldots,m$, let $\alpha_i\in(-1,1)\setminus\{0\}$ be arbitrary. Then let $g_i(x,y)$ be of the form $g_i(x,y)=\lambda_iy+a_ix+t_i$ such that $\lambda_i=\alpha_i^{-1}$,  $g_i(x_{i-1},y_{i-1})=y_{\ell(i)}$ and $g_i(x_{i},y_{i})=y_{r(i)}$. This assumption guarantees that the repeller of $F$ in \eqref{eq:interpoldyn} is a graph of a function $G$ so that $G(x_i)=y_i$ for $i=0,\ldots,m$. Simple calculations show that
$$
a_i=\frac{y_{r(i)}-y_{\ell(i)}-\alpha_i^{-1}(y_i-y_{i-1})}{x_i-x_{i-1}}\text{ and }t_i=y_{\ell(i)}-\alpha_i^{-1}y_{i-1}-a_ix_{i-1}.
$$
For a visualisation of a markovian fractal interpolation function, see Figure~\ref{fig:fracint2}.

\begin{figure}\label{fig:fracint2}
  \centering
  \includegraphics[width=7cm]{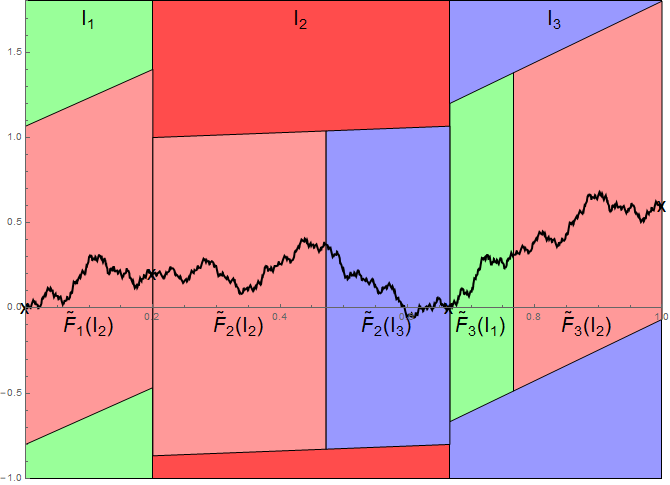}
\caption{The markovian fractal interpolation function and its defining dynamics for $\Delta=\{(0,0),(1/5,1/5),(2/3,0),(1,3/5)\}$ and $\alpha_1=2/3, \alpha_2=-2/3$ and $\alpha_3=2/3$.}
\end{figure}

\begin{figure}\label{fig:fracint3}
	\centering
	\includegraphics[width=11cm]{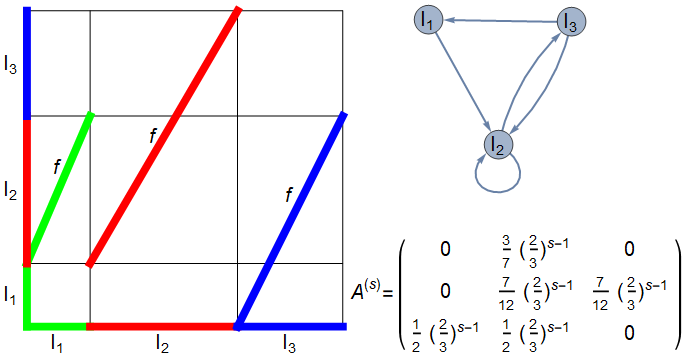}
	\caption{The base system $f$, its markovian structure and the matrix $A^{(s)}$ of the markovian fractal interpolation function of Figure~\ref{fig:fracint2}.}.
\end{figure}

Since the base dynamics is Markov, not all sequences of functions $f_i$ is admissible. We define the following $m\times m$ matrix $A=(A_{i,j})_{i,j=1}^m$ as follows
\begin{equation}\label{eq:adjacency}
A_{i,j}=\begin{cases}
1 & \text{if }\ell(i)+1\leq j\leq r(i), \\
0 & \text{otherwise. }
\end{cases}
\end{equation}
Hence, an infinite sequence $\mathbf{i}=(i_1,i_2,\ldots)$ is addmissible if $A_{i_k,i_{k+1}}=1$ for every $k=1,2,\ldots$. Denote $\Sigma_A\subseteq\{1,\ldots,m\}^{\mathbb{N}}$ the set of all admissible sequences, that is, $\ii=(i_1,i_2,\ldots)\in\Sigma_A$ if and only if $A_{i_k,i_{k+1}}=1$ for every $k\geq1$. By using the local inverses $\tilde{F}_i$, one can define the natural map from $\Sigma_A$ to $\mathrm{graph}(G)$ as
\begin{equation}\label{eq:natproj}
\Pi(\mathbf{i})=\lim_{n\to\infty}\tilde{F}_{i_1}\circ\cdots\circ\tilde{F}_{i_n}(x_{\ell(i_n)},y_{\ell(i_n)}).
\end{equation}
Thus, $\Pi(\mathbf{i})_2=G(\Pi(\mathbf{i})_1)$, where $\Pi(\mathbf{i})_i$ denotes the $i$th coordinate of $\Pi(\mathbf{i})$, moreover, $F(\Pi(\mathbf{i}))=\Pi(\sigma\mathbf{i})$, where $\sigma$ is the left-shift on $\Sigma_A$.

Since $f$ is Markov with respect to the intervals $\{[x_{i-1},x_i]\}_{i=1}^m$, one can decompose the intervals into finitely many classes with respect to recurrency. Since the repeller of $F$ restricted to any recurrent class of intervals is $\mathrm{graph}(G)$ restricted to the intervals, without loss of generality, we may assume that $f$ is topologically transitive. On the other hand, if the period of $f$ would be $p\geq2$ then again by decomposing  the intervals into finitely many classes, the repeller of $F^p$ restricted to a class is the restriction of $\mathrm{graph}(G)$. Thus, without loss of generality, we may assume that $f$ (and the matrix $A$) is aperiodic. Namely, there exists a positive $k\geq1$ such that every element of $A^k$ is positive.

Since the local inverses are strict contractions, there exists an interval $D=[a,b]$ such that $\bigcup_{i=1}^m\tilde{F}_i([x_{\ell(i)},x_{r(i)}]\times D)\subseteq [0,1]\times D$. In order to determine the box counting dimension of $\mathrm{graph}(G)$, it is natural to cover $\mathrm{graph}(G)$ with sets of the form $\tilde{F}_{\pmb\omega}([x_{\ell(i_{|\pmb\omega|})},x_{r(i_{|\pmb\omega|})}]\times D)$. These sets are paralelograms with height paralel to the $x$-axis $\gamma_{\pmb\omega}$ and side length (paralel to the $y$-axis) $\alpha_{\pmb\omega}$.

Let us define the matrix $A^{(s)}=(A_{i,j}^{(s)})_{i,j=1}^m$ for $s\in[1,2]$ as follows
\begin{equation}\label{eq:smatrix}
A_{i,j}^{(s)}=|\alpha_i|\gamma_i^{-(s-1)}A_{i,j}=\begin{cases}
|\alpha_i|\gamma_i^{-(s-1)} & \text{if }\ell(i)+1\leq j\leq r(i), \\
0 & \text{otherwise. }
\end{cases}
\end{equation}

Similarly to Barnsley's fractal interpolation function, we distinguish two cases $\rho(A^{(1)})\leq1$ and $\rho(A^{(1)})>1$, where $\rho(\cdot)$ denotes the spectral radius. The first case implies that for most of the sets $\tilde{F}_{\pmb\omega}([x_{\ell(i_{|\pmb\omega|})},x_{r(i_{|\pmb\omega|})}]\times D)$, the component on the $x$-axis is longer than the component on the $y$-axis.

\begin{theorem}\label{thm:markovbox} If the data set $\Delta$ is not collinear then
\begin{equation}
\dim_B\mathrm{graph}(G)=\begin{cases}
1 & \text{if }\rho(A^{(1)})\leq1,\\
s & \text{if }\rho(A^{(1)})>1,
\end{cases}
\end{equation}
where $s$ is the unique solution of the equation $\rho(A^{(s)})=1$.
\end{theorem}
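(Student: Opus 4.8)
The plan is to read off the box dimension from the box-counting function, using the standard fact that for the graph of a continuous $G\colon[0,1]\to\mathbb R$ one has $N(\delta)\asymp\delta^{-1}\bigl(1+V(\delta)\bigr)$, where $V(\delta)=\sum_J\operatorname{osc}_J G$ is the $\delta$-variation, the sum running over a partition of $[0,1]$ into intervals $J$ of length comparable to $\delta$. Thus $\dim_B\mathrm{graph}(G)=1+\lim_{\delta\to0}\frac{\max(0,\log V(\delta))}{-\log\delta}$, and the whole problem reduces to pinning down the growth of $V(\delta)$. Before estimating it I would dispose of the elementary statements about $s$. Since every $\gamma_i>1$, each nonzero entry $|\alpha_i|\gamma_i^{-(s-1)}$ of $A^{(s)}$ is strictly decreasing in $s$; as $A$ (hence $A^{(s)}$) is irreducible after the aperiodicity reduction made above, Perron--Frobenius gives that $s\mapsto\rho(A^{(s)})$ is continuous and strictly decreasing. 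Testing the positive vector $v=(x_i-x_{i-1})_{i=1}^m$ yields $(A^{(2)}v)_i=|\alpha_i|\gamma_i^{-1}(x_{r(i)}-x_{\ell(i)})=|\alpha_i|(x_i-x_{i-1})<v_i$, so $\rho(A^{(2)})<1$; combined with the hypothesis $\rho(A^{(1)})>1$ this produces a unique root $s\in(1,2)$, as claimed.

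For the estimates I would replace the uniform partition by the stopping-time family $\Xi_\delta$ of shortest admissible prefixes $\pmb\omega$ with $\gamma_{\pmb\omega}^{-1}\le\delta$; since the $\gamma_i$ are bounded, the intervals $I_{\pmb\omega}$, $\pmb\omega\in\Xi_\delta$, all have length $\asymp\delta$ and tile $[0,1]$, whence $V(\delta)\asymp\sum_{\pmb\omega\in\Xi_\delta}\operatorname{osc}_{I_{\pmb\omega}}G$. The self-affinity is what drives everything: writing the lower-triangular map as $\tilde F_{\pmb\omega}(x,y)=(\gamma_{\pmb\omega}^{-1}x+\ast,\,c_{\pmb\omega}x+\alpha_{\pmb\omega}y+\ast)$, the graph over $I_{\pmb\omega}$ is the $\tilde F_{\pmb\omega}$-image of the base graph, so $\operatorname{osc}_{I_{\pmb\omega}}G=\operatorname{osc}(c_{\pmb\omega}\,\mathrm{id}+\alpha_{\pmb\omega}G)$ over the base interval, with $c_{\pmb\omega}=c_{\omega_1}\gamma_{\sigma\pmb\omega}^{-1}+\alpha_{\omega_1}c_{\sigma\pmb\omega}$. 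An easy induction on this recursion gives $\operatorname{osc}_{I_{\pmb\omega}}G\lesssim\max(|\alpha_{\pmb\omega}|,\gamma_{\pmb\omega}^{-1})$. Peeling the first symbol $j$ from $\pmb\omega$ turns the one-step inequality $\operatorname{osc}_{I_{j\pmb\omega'}}G\le|\alpha_j|\operatorname{osc}_{I_{\pmb\omega'}}G+O(\gamma_{\pmb\omega}^{-1})$, summed over admissible continuations, into iterated multiplication by the transition matrix with entries $|\alpha_i|A_{i,j}$; the bookkeeping converting ``number of symbols'' into ``scale $\delta$'' is exactly what inserts the factor $\gamma_{\pmb\omega}^{-(s-1)}$, so the renewal/pressure identity $\sum_{\pmb\omega\in\Xi_\delta}|\alpha_{\pmb\omega}|\gamma_{\pmb\omega}^{-(s-1)}\asymp1$ holds at the critical exponent, i.e. $\sum_{\pmb\omega\in\Xi_\delta}|\alpha_{\pmb\omega}|\asymp\delta^{1-s}$. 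The cylinders with $\gamma_{\pmb\omega}^{-1}\ge|\alpha_{\pmb\omega}|$ contribute $\sum\gamma_{\pmb\omega}^{-1}\asymp1$ (the one-dimensional baseline), so the upper oscillation bound gives $V(\delta)\lesssim\delta^{1-s}$ and hence $\dim_B\le s$. In the complementary case $\rho(A^{(1)})\le1$ the same computation makes $V(\delta)$ bounded, and since the projection to the $x$-axis already forces $\dim_B\ge1$, we obtain $\dim_B=1$.

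The remaining, and hardest, point is the matching lower bound $V(\delta)\gtrsim\delta^{1-s}$ when $\rho(A^{(1)})>1$. One cannot simply reverse a triangle inequality here, because $\operatorname{osc}_{I_{\pmb\omega}}G=\operatorname{osc}(c_{\pmb\omega}\,\mathrm{id}+\alpha_{\pmb\omega}G)$ might be far smaller than $|\alpha_{\pmb\omega}|$ if the linear shear $c_{\pmb\omega}\,\mathrm{id}$ nearly cancels the oscillation of $\alpha_{\pmb\omega}G$. \textbf{This potential cancellation is exactly what the non-collinearity of $\Delta$ rules out, and controlling it is the crux of the proof.} The plan is to establish a uniform, scale-independent lower bound $\operatorname{osc}_{J}G\ge c_0|\alpha_{\pmb\omega}|$ whenever $J$ is the union of a bounded number of consecutive children of $I_{\pmb\omega}$: a single affine function cannot simultaneously flatten $G$ on several adjacent sub-cylinders, so it suffices to exhibit, for a non-collinear $\Delta$, a finite block of base sub-intervals on which the discrete second differences of $G$ are bounded below, and then to check by the self-affine relation that this non-degeneracy is inherited under every $\tilde F_{\pmb\omega}$ with a loss of only the factor $|\alpha_{\pmb\omega}|$. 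Making this step quantitative and uniform in $\pmb\omega$ is where the hypothesis is genuinely used and is the one part that requires real work; granting it, a positive proportion of the cylinders in $\Xi_\delta$ satisfy $\operatorname{osc}_{I_{\pmb\omega}}G\gtrsim|\alpha_{\pmb\omega}|$, so $V(\delta)\gtrsim\sum_{\pmb\omega\in\Xi_\delta}|\alpha_{\pmb\omega}|\asymp\delta^{1-s}$, and together with the upper bound we conclude $N(\delta)\asymp\delta^{-s}$ and $\dim_B\mathrm{graph}(G)=s$.
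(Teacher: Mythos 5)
Your overall architecture (box dimension via the oscillation sum $V(\delta)$, a stopping-time section $\Xi_\delta$, and the renewal identity $\sum_{\pmb\omega\in\Xi_\delta}|\alpha_{\pmb\omega}|\gamma_{\pmb\omega}^{-(s-1)}\asymp1$ coming from the Perron--Frobenius data of $A^{(s)}$) is viable, and it is genuinely different from the paper's proof. But as written the argument has two real gaps, one of which is a step that is actually false, and the other is precisely the step you defer.

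First, the pointwise bound $\operatorname{osc}_{I_{\pmb\omega}}G\lesssim\max(|\alpha_{\pmb\omega}|,\gamma_{\pmb\omega}^{-1})$ cannot be obtained by any induction, because it is not true in general. Unrolling your recursion gives
\begin{equation*}
c_{\pmb\omega}=\sum_{k=1}^{n}\alpha_{\omega_1\cdots\omega_{k-1}}\,c_{\omega_k}\,\gamma^{-1}_{\omega_{k+1}\cdots\omega_n}.
\end{equation*}
If the alphabet contains a symbol $i$ with $|\alpha_i|\gamma_i>1$ and a symbol $j$ with $|\alpha_j|\gamma_j<1$ (perfectly compatible with $\rho(A^{(1)})>1$), then for words of the form $\pmb\omega=i^nj^n$ the middle term has modulus $\asymp(|\alpha_i|\gamma_j^{-1})^n$, which is exponentially larger than both $|\alpha_{\pmb\omega}|=(|\alpha_i||\alpha_j|)^n$ and $\gamma_{\pmb\omega}^{-1}=(\gamma_i\gamma_j)^{-n}$; when the $c_{\omega_k}$ have a consistent sign there is no cancellation, and the oscillation really is of this order, since $\operatorname{osc}_{I_{\pmb\omega}}G\ge|c_{\pmb\omega}|\,|T_{\omega_n}|-|\alpha_{\pmb\omega}|\operatorname{osc}G$ where $T_{\omega_n}=[x_{\ell(\omega_n)},x_{r(\omega_n)}]$. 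The conclusion $V(\delta)\lesssim\delta^{1-s}$ is nevertheless correct, but it must be proved in summed form: use $\operatorname{osc}_{I_{\pmb\omega}}G\le|c_{\pmb\omega}|\,|T_{\omega_n}|+|\alpha_{\pmb\omega}|\operatorname{osc}G$, insert the unrolled formula, and sum over $\Xi_\delta$; for each fixed prefix the completing suffixes form a section, so their $\gamma^{-1}$'s sum to $O(1)$, while the prefixes, grouped by the dyadic scale of $\gamma_{pre}^{-1}$ and estimated by the renewal identity at each scale, contribute a geometric sum $\lesssim\delta^{1-s}$. So the shear terms are controlled on average even though they are not controlled cylinder by cylinder.

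Second, the lower bound, which you correctly identify as the crux, is only announced, not proved; and this is exactly where the paper does something to avoid the work. The good news is that your second-difference plan does close the gap, and more simply than you suggest: for $x_2=\lambda x_1+(1-\lambda)x_3$ in $T_{\omega_n}$ and $h=c_{\pmb\omega}\,\mathrm{id}+\alpha_{\pmb\omega}G$, the affine part cancels exactly in the second difference, so $\operatorname{osc}_{I_{\pmb\omega}}G\ge\operatorname{osc}_{T_{\omega_n}}h\ge|\alpha_{\pmb\omega}|\,\kappa_{\omega_n}$, where $\kappa_i$ is the supremum of second differences of $G$ on $T_i$. What must be proved is $\kappa_i>0$ for every $i$, i.e.\ that $G$ is affine on no target interval: if $G$ were affine on some $T_i$, then, since the graph over $I_i$ is the image of the graph over $T_i$ under the affine bijection $\tilde F_i$, affineness propagates along the Markov structure, hence by irreducibility $G$ is affine on every $T_i$; the $T_i$ are closed intervals covering $[0,1]$ which overlap in nondegenerate intervals, so $G$ would be globally affine, contradicting non-collinearity. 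With this, \emph{every} cylinder satisfies $\operatorname{osc}_{I_{\pmb\omega}}G\ge c_0|\alpha_{\pmb\omega}|$ and your $V(\delta)\gtrsim\delta^{1-s}$ follows.

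For comparison, the paper sidesteps both difficulties: for the lower bound it splits into two cases. If condition \eqref{eq:assum} holds, it invokes Theorem~\ref{thm:markovhaus} and the inequality $\dim_H\le\underline{\dim}_B$, so no oscillation estimate is needed at all; if \eqref{eq:assum} fails, the system is affinely conjugate to a diagonal one ($a_i=0$), where there is no shear and the vertical extent of the graph over $I_{\pmb\omega}$ is exactly $|\alpha_{\pmb\omega}|\,|D|$, so a direct square count against the Markov measure \eqref{eq:markovmeas} gives $N(r)\gtrsim Cr^{-s}$. Your route, once repaired as above, is more self-contained (it does not lean on the deep machinery behind Theorem~\ref{thm:markovhaus}); the price is exactly the two estimates you left out.
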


For completeness, we give a proof later.

The problem of Hausdorff dimension is significantly different. In point of view of Theorem~\ref{thm:markovbox}, it is natural to assume that $\rho(A^{(1)})>1$. One way to find the Hausdorff dimension of $\mathrm{graph}(G)$ is to find a iterated function system of affine transformations, which attractor is contained in $\mathrm{graph}(G)$, and satisfies the conditions given in  B\'ar\'any, Hochman and Rapaport \cite{BHR},
\cite[Theorem~6.3]{BaRaSiKochin1}.

\begin{theorem}\label{thm:markovhaus} Let the data set $\Delta$ be not collinear, the adjacency matrix $A$ be irreducible and aperiodic, and $(\alpha_1,\ldots,\alpha_m)\in\left((-1,1)\setminus\{0\}\right)^m$ be such that $\rho(A^{(1)})>1$. Moreover, let us assume that there exist $\ell\geq1$, $\pmb\omega,\pmb\tau\in\Sigma_{A,\ell}$ such that
\begin{equation}\label{eq:assum}
\alpha_{\pmb\omega}=\alpha_{\pmb\tau}, \gamma_{\pmb\tau}=\gamma_{\pmb\omega}, \omega_1=\tau_1, \omega_\ell=\tau_\ell\text{ and }D\tilde{F}_{\pmb\omega}\neq D\tilde{F}_{\pmb\tau}.
\end{equation}
Then
$$
\dim_H\mathrm{graph}(G)=s,\text{ where $s$ is the unique solution of $\rho(A^{(s)})=1$.}
$$
\end{theorem}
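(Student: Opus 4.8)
The plan is to realize a lower bound for $\dim_H\mathrm{graph}(G)$ by embedding a self-affine set coming from a genuine iterated function system of affine maps, and then invoke the dimension formula of B\'ar\'any, Hochman and Rapaport (\cite{BHR}, \cite[Theorem~6.3]{BaRaSiKochin1}); the matching upper bound will come from Theorem~\ref{thm:markovbox}, since $\dim_H\leq\overline{\dim}_B=s$ whenever $\rho(A^{(1)})>1$. Thus the whole content is the lower bound. First I would use aperiodicity of $A$ to fix a word that returns the symbol $\omega_1=\tau_1$ to itself while passing through an arbitrary prescribed symbol; more precisely, since some power $A^k$ is strictly positive, for any admissible block I can find connecting blocks that glue it into a loop based at the state $\omega_1$. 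Using the hypothesis \eqref{eq:assum}, the two words $\pmb\omega,\pmb\tau\in\Sigma_{A,\ell}$ share their first and last symbols, carry the same contraction data $\alpha_{\pmb\omega}=\alpha_{\pmb\tau}$ and $\gamma_{\pmb\omega}=\gamma_{\pmb\tau}$, yet have distinct linear parts $D\tilde F_{\pmb\omega}\neq D\tilde F_{\pmb\tau}$. Because $\omega_1=\tau_1$ and $\omega_\ell=\tau_\ell$, both words can be freely concatenated with themselves and with each other while staying admissible in $\Sigma_A$, so the sub-semigroup they generate lives inside the allowed symbolic dynamics.

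Next I would build a self-affine IFS from these two building blocks. Consider the two affine maps $\Phi_0=\tilde F_{\pmb\omega}$ and $\Phi_1=\tilde F_{\pmb\tau}$ (after passing to the appropriate cylinder so that both map a common parallelogram into itself). Since $\alpha_{\pmb\omega}=\alpha_{\pmb\tau}$ and $\gamma_{\pmb\omega}=\gamma_{\pmb\tau}$, the two maps have the same pair of singular values, and their attractor $\Lambda=\Pi(\{0,1\}^{\mathbb N}\text{-coded sequences})$ is a genuine self-affine subset of $\mathrm{graph}(G)$. The relevant exponents are $\chi_1=-\log\gamma_{\pmb\omega}$ and $\chi_2=\log|\alpha_{\pmb\omega}|$ (both negative, with $|\chi_1|<|\chi_2|$ in the regime where the $y$-direction dominates), so the affinity/Lyapunov dimension of this two-map system is the value $s'$ determined by the usual pressure equation; by iterating the pair I can take $\ell$ large and thereby tune $s'$ so that it approaches $s$, the solution of $\rho(A^{(s)})=1$. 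The point of using whole admissible loops rather than single generators is exactly that it lets me approximate the full Markov pressure by a two-symbol Bernoulli pressure.

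To apply \cite[Theorem~6.3]{BaRaSiKochin1} I must verify its hypotheses for $\{\Phi_0,\Phi_1\}$: the maps are contractions with triangular linear parts, they are not simultaneously diagonalisable, and the system is irreducible/totally non-conformal in the required sense. The condition $D\tilde F_{\pmb\omega}\neq D\tilde F_{\pmb\tau}$ with equal singular values guarantees that the linear parts are distinct rotation-free lower-triangular matrices sharing the same diagonal, hence they do not commute and cannot be simultaneously triangularised into a conformal configuration; this is the analogue of the non-collinearity condition \eqref{eq:condforHD} in the Markovian setting and is what rules out an exact-overlaps or carpet-type degeneracy. I would also need to check a separation or exponential-separation condition on the generated semigroup; here I would argue that for a generic or explicitly perturbed choice the translation parts are distinct enough, or cite the openness results in \cite{BHR} that allow one to deduce the dimension formula under the stated algebraic non-degeneracy without imposing strong separation. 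The conclusion of that theorem then gives $\dim_H\Lambda=\min\{s',2\}=s'$, and since $\Lambda\subseteq\mathrm{graph}(G)$ we obtain $\dim_H\mathrm{graph}(G)\geq s'$; letting the block length grow yields $\dim_H\mathrm{graph}(G)\geq s$, which combined with the box-dimension upper bound finishes the proof.

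The hard part will be the verification that the embedded two-map system genuinely satisfies the full hypotheses of \cite[Theorem~6.3]{BaRaSiKochin1}, in particular the non-degeneracy (irreducibility together with non-simultaneous-diagonalisability) \emph{and} whatever separation input that theorem requires, all while staying admissible inside $\Sigma_A$ and simultaneously approximating the Markov spectral value $s$. Extracting a single self-affine sub-IFS whose Lyapunov dimension converges to $s$ as $\ell\to\infty$ is the delicate quantitative step, because the pressure of a two-symbol subsystem is strictly below the full Markov pressure at finite $\ell$; controlling this gap, and ensuring the algebraic condition \eqref{eq:assum} survives the concatenation, is where the real work lies.
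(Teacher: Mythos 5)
Your overall strategy (find affine sub-structure inside $\mathrm{graph}(G)$, apply the B\'ar\'any--Hochman--Rapaport theorem for the lower bound, and match it with the box-dimension upper bound) is the same as the paper's, but the concrete construction you propose cannot work. You build a sub-IFS out of exactly \emph{two} maps $\Phi_0=\tilde F_{\pmb\omega}$, $\Phi_1=\tilde F_{\pmb\tau}$ and claim that "by iterating the pair" and "letting the block length grow" its dimension $s'$ approaches $s$. This fails for two reasons. First, the hypothesis \eqref{eq:assum} supplies $\pmb\omega,\pmb\tau$ at one \emph{fixed} length $\ell$; there is no parameter to send to infinity, and iterating a fixed pair of maps does not change the attractor or its dimension. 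Second, and more fundamentally, a two-map system has symbolic entropy at most $\log 2$, while achieving dimension $s$ requires (by the Ledrappier--Young/Lyapunov-dimension formula) measures whose entropy is comparable to the full Markov entropy, i.e.\ subsystems with exponentially many words: the deficit between the two-symbol Bernoulli pressure and the Markov pressure is of \emph{constant} order (roughly $h_{top}/\log\gamma$) and does not close as block length grows. Your own closing paragraph concedes this gap but offers no mechanism to control it; within a two-generator framework there is none. A further problem: the pair $\pmb\omega,\pmb\tau$ need not satisfy $\gamma_{\pmb\omega}^{-1}<|\alpha_{\pmb\omega}|$, so your two-map system may not even lie in the $y$-dominated regime where the Lyapunov dimension exceeds $1$. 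Finally, the appeal to "generic or explicitly perturbed" translations to get separation is not available --- the theorem concerns the given system, not a perturbation (in fact no such perturbation is needed here, since distinct admissible words of equal length have $x$-projections with disjoint interiors).

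The paper's proof avoids all of this by working with measures of nearly full entropy rather than a small sub-IFS: it constructs, via Perron--Frobenius applied to $A^{(s)}$, a Markov measure $\mu$ on $\Sigma_A$ with Lyapunov dimension $D(\mu)=s$; approximates $\mu$ in weak-* topology and entropy by $n$-step Bernoulli measures $\hat\nu_n$ (Proposition~\ref{prop:nbern}); then, using the strong law of large numbers and Egorov's theorem, restricts to $nm$-step Bernoulli measures $\overline\nu_{n,m}$ supported on the set $Y_{m,n}$ of words with $\gamma_{\pmb\omega}^{-1}<\alpha_{\pmb\omega}$, keeping $D(\overline\nu_{n,m})\geq s-C\varepsilon$. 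The hypothesis \eqref{eq:assum} enters only at the very last step, exactly where your proposal uses it correctly in spirit: the words $\pmb\omega,\pmb\tau$ are padded by connecting words $\pmb\upsilon_1,\pmb\upsilon_2$ and a common tail $\pmb\kappa$ to produce two words in $Y_{m,n}$ with equal $\alpha$'s and $\gamma$'s but distinct derivative matrices, which verifies the non-simultaneous-diagonalisability hypothesis of \cite[Theorem~6.3]{BaRaSiKochin1} for the \emph{large} system. To repair your argument you would have to replace the two-map IFS by such a family of exponentially many admissible words (or the corresponding Bernoulli measure), at which point you would essentially be reproducing the paper's proof.
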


We remind that $\Sigma_n=\{1,\ldots,m\}^n$ is the collection of words of length $n$.
For $n\in\N$, let $(p_1,\ldots,p_{|\Sigma_n|})$ be a probabiliy vector and let $\nu$ be the corresponding Bernoulli measure, living on $(\Sigma_n^\N,\sigma_{\Sigma_n})$, where $\sigma_{\Sigma_n}$ is the usual left shift but acting on $\Sigma_n^\N$. We have a natural isometry between $(\Sigma_n^\N,\sigma_{\Sigma_n})$ and $(\Sigma, \sigma^n)$, let $\tilde\nu$ be the image of $\nu$ under this isometry. Finally, let
\[
\hat\nu = \frac 1n \sum_{i=0}^{n-1} \tilde\nu \circ \sigma^{-i}.
\]
The measures $\hat\nu$ that can be obtained by this construction will be called $n$-{\it Bernoulli} measures. Note that the $n$-Bernoulli measures are ergodic and $\sigma$ invariant measures on $\Sigma$.

\begin{proposition} \label{prop:nbern}
Let $A$ be an irreducible and aperiodic adjacency and let $(\Sigma_A, \sigma)$ be a subshift of finite type and let $\mu$ be a $\sigma$-invariant measure supported on $\Sigma_A$. Then there exists a sequence of $n$-Bernoulli measures $\hat\nu_n, n\to\infty$ supported on $\Sigma_A$ and converging to $\mu$ both in weak-* topology and in entropy.
\end{proposition}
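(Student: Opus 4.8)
The plan is to realise $\mu$ as a weak-$*$ and entropy limit of block-independent measures, the only difficulty being to keep the independent concatenation of blocks inside $\Sigma_A$; this is arranged by gluing short deterministic bridges at the block boundaries, using aperiodicity. Fix, by aperiodicity, an integer $k$ with all entries of $A^k$ positive, a reference symbol $a^*$, and (by irreducibility) a symbol $e^*$ with $A_{e^*,a^*}=1$. For each admissible word $w=(w_1,\dots,w_n)$ of length $n$, use $A^k>0$ to select once and for all an admissible prefix $\pi_w$ of length $k$ starting with $a^*$ such that $\pi_w w$ is admissible, and an admissible suffix $\rho_w$ of length $k$ ending with $e^*$ such that $w\rho_w$ is admissible, and set $B_w:=\pi_w\, w\,\rho_w$, a word of length $N:=n+2k$. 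Each $B_w$ is admissible, begins with $a^*$ and ends with $e^*$; since $A_{e^*,a^*}=1$, every concatenation of such blocks is admissible, and the central coordinates $k+1,\dots,k+n$ of $B_w$ recover $w$, so $w\mapsto B_w$ is injective.

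I would then assign the block probability $p_{B_w}:=\mu([w])$ (and $0$ to all other length-$N$ words), where $[w]$ is the cylinder of $w$; this is a probability vector on $\Sigma_N$ since $\sum_w\mu([w])=1$. Let $\nu$ be the corresponding Bernoulli measure on $\Sigma_N^{\N}$, let $\tilde\nu$ be its image on $(\Sigma,\sigma^N)$, and let $\hat\nu_N=\frac1N\sum_{i=0}^{N-1}\tilde\nu\circ\sigma^{-i}$ be the associated $N$-Bernoulli measure. By the previous paragraph $\tilde\nu$, and hence $\hat\nu_N$, is supported on $\Sigma_A$. For the entropy, the blocks are i.i.d., so $h_{\sigma^N}(\tilde\nu)=-\sum_w\mu([w])\log\mu([w])=:H_n(\mu)$, the Shannon entropy of the partition into length-$n$ cylinders; affinity of entropy together with the power rule $h_{\sigma^N}=N\,h_\sigma$ give $h_\sigma(\hat\nu_N)=\tfrac1N H_n(\mu)=\tfrac{n}{n+2k}\cdot\tfrac1n H_n(\mu)$, which converges to $h_\sigma(\mu)$ as $n\to\infty$ by the Kolmogorov--Sinai theorem, the one-symbol partition being a generator under $\sigma$.

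For weak-$*$ convergence I would fix a cylinder $[u]$ of length $\ell$ and write $\hat\nu_N([u])=\frac1N\sum_{i=0}^{N-1}\tilde\nu(\sigma^{-i}[u])$. Whenever the window of length $\ell$ lies entirely within the central $w$-part of a single period, the probability of seeing $u$ there equals $\mu([u])$, by $\sigma$-invariance of $\mu$ and the fact that the central part is distributed as the $n$-block marginal of $\mu$; there are $n-\ell+1$ such positions in each period. The remaining positions, those meeting a bridge $\pi_w$ or $\rho_w$ or straddling a block boundary, number only $O(k+\ell)$ per period, a constant independent of $n$. Hence $\hat\nu_N([u])=\tfrac{n-\ell+1}{n+2k}\,\mu([u])+O\!\big(\tfrac{k+\ell}{n}\big)\to\mu([u])$, giving weak-$*$ convergence.

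The crux is the junction problem addressed in the first paragraph: concatenating independent $\mu$-typical $n$-blocks generically creates forbidden transitions, so without modification the pushforward would escape $\Sigma_A$. Aperiodicity is exactly what permits fixed-length deterministic bridges between arbitrary prescribed endpoints; the price is the $2k$ extra symbols per block, but because $k$ stays fixed while $n\to\infty$, these bridges occupy a vanishing fraction of coordinates and hence disturb neither the weak-$*$ limit nor the entropy. The routine points I would still need to verify are the injectivity and admissibility bookkeeping for $B_w$ and the elementary counting of ``good'' versus ``boundary'' positions in the weak-$*$ estimate.
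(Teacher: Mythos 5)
Your proposal is correct and follows essentially the same route as the paper's proof: the same aperiodicity-based construction of fixed-length bridging words so that padded blocks all start and end with compatible symbols (making arbitrary concatenations admissible), the same assignment of $\mu$-cylinder masses to the padded blocks, the same entropy computation via the $1/N$ scaling, and a weak-$*$ argument that, like the paper's variation estimate, rests on the observation that all but $O(k+\ell)$ of the $N$ window positions reproduce $\mu$ exactly. The only cosmetic differences are your indexing (padding length-$n$ words up to $n+2k$ rather than length-$(n-2k)$ words up to $n$) and your checking weak-$*$ convergence on cylinder sets instead of on general continuous functions via $\mathrm{var}_\ell$, which are equivalent.
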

\begin{proof}

Fix $k$ such that all elements of $A^k$ are positive. We choose a pair $(i,j)\in \{1,\ldots,m\}^2$ such that $A_{ij}=1$. For every $\ell\in \{1,\ldots,m\}$ we can choose a word $\pmb{p}(\ell)\in\Sigma_{A,k}$ such that $p_1=j$ and $\pmb{p}(\ell)\ell\in\Sigma_{A,k+1}$ and a word $\pmb{s}(\ell)\in\Sigma_{A,k}$ such that $s_k=i$ and $\ell\pmb{s}(\ell)\in\Sigma_{A,k+1}$. For any $n\geq 2k+1$ and for any word $\pmb\omega\in\Sigma_{A, n-2k}$ let $\hat{\pmb\omega}=\pmb{p}(\omega_1)\pmb{\omega}\pmb{s}(\omega_{n-2k})$, denote the set of such words by $\hat\Sigma_{A,n}$. Note that $\hat\Sigma_{A,n}\subset \Sigma_{A,n}$, moreover each word $\hat{\pmb\omega}$ begins with $j$ and ends with $i$, hence any concatenation of those words is also admissible.

Let us show this construction on the example in Figure~\ref{fig:fracint2}. In this case $$
A=\begin{pmatrix}
    0 & 1 & 0 \\
    0 & 1 & 1 \\
    1 & 1 & 0
  \end{pmatrix}.
$$
Choose $(i,j)=(2,2)$. The matrix $A^3$ has strictly positive elements, and it is easy to check that choices $\pmb{p}(1)=(2,3),\pmb{p}(2)=(2,2),\pmb{p}(3)=(2,2)$ and $\pmb{s}(1)=(2,2),\pmb{s}(2)=(2,2),\pmb{s}(3)=(2,2)$ are admissible and appropriate.

Let $\nu_n$ be the the Bernoulli measure on $(\Sigma_{A,n}^\N,\sigma_{\Sigma_n})$ obtained by the probabiliy vector $(p_{\pmb\tau})_{\pmb\tau\in\Sigma_{A,n}}$, where
\[
p_{\pmb\tau} =
\begin{cases}
\mu([\pmb\omega])&\text{if there exists $\pmb\omega\in\Sigma_{A,n-2k}$ such that }\pmb\tau=\hat{\pmb\omega},\\
0&{\rm otherwise.}
\end{cases}
\]
Let $\tilde\nu_n$ be the measure on $(\Sigma_A,\sigma^n)$ and let $\hat\nu_n$ be the $n$-Bernoulli measure on $(\Sigma_A,\sigma)$ as introduced previously. We need to prove two claims.

\begin{claim}\label{claim:1} $h(\hat\nu_n)\to h(\mu)$ as $n\to\infty$.\end{claim}
\begin{proof}
We have
\[
h(\mu)=\lim_{n\to\infty} -\frac 1 {n-2k} \sum_{\Sigma_{A,n-2k}} \mu([\pmb\tau]) \log \mu([\pmb\tau]).
\]
At the same time,
\[
h(\tilde\nu_n,\sigma^n)= -\sum p_{\pmb\omega} \log p_{\pmb\omega}= -\sum_{\Sigma_{A,n-2k}} \mu([\pmb\tau]) \log \mu([\pmb\tau]),
\]
hence
\[
h(\hat\nu_n) = -\frac 1n \sum_{\Sigma_{A,n-2k}} \mu([\pmb\tau]) \log \mu([\pmb\tau]).
\]
\end{proof}
\begin{claim}\label{claim:2} $\hat\nu_n\to \mu$ in weak-* topology. \end{claim}

\begin{proof} Let $w\colon\Sigma\to\R$ be a continuous function and denote by $\mathrm{var}_\ell(w)$ the supremum of differences $w(x)-w(y)$ over $x,y$ belonging to the same $\ell$-th level cylinder. We have
\[
\left|\int w d\mu - \int w d\hat\nu_n\right| \leq \frac 1n \sum_{i=0}^{n-1} \left|\int w d(\mu\circ\sigma^{-i}) - \int w d(\tilde\nu_n\circ\sigma^{-i})\right|
\]
(we remind that $\mu$ is $\sigma$-invariant, hence $\mu=\mu\circ\sigma^{-i}$ for any $i\geq1$). For any $n-2k$-th level cylinder set $[\pmb\omega]$, $\tilde\nu_n(\sigma^{-k}[\pmb\omega])=\tilde\nu_n([\pmb{p}(\omega_1)\pmb\omega])=\tilde\nu_n([\pmb{p}(\omega_1)\pmb\omega \pmb{s}(\omega_{n-k})])=\mu([\pmb\omega])$, hence for $i=k,\ldots, n-k+1$ we have
\[
\left|\int w d(\mu\circ\sigma^{-i}) - \int w d(\tilde\nu_n\circ\sigma^{-i})\right| \leq \var_{i-k}(w).
\]
The other summands can be estimated from above by $\var_0(w)$. Summarizing,
\[
\left|\int w d\mu - \int w d\hat\nu_n\right| \leq \frac {2k} n \var_0(w) + \frac {n-2k}n \frac 1 {n-2k} \sum_{i=1}^{n-2k} \var_i(w) \to 0.
\]
\end{proof}
The combination of Claims \ref{claim:1} and \ref{claim:2} proves the proposition.
\end{proof}

\begin{proof}[Proof of Theorem~\ref{thm:markovhaus}] The strategy of the proof is the following:
\begin{enumerate}
  \item Find a $\sigma$-invariant ergodic probability measure $\mu$ on $\Sigma_A$ which natural projection is a candidate for achieving the Hausdorff dimension;
  \item find a approximating sequence of $n$-step Bernoulli measures $\hat{\nu}_n$ such that $\hat{\nu}_n\to\mu$ in weak-* and entropy topology;
  \item show that $\dim_H\Pi_*\hat{\nu}_n\to s$ as $n\to\infty$.
\end{enumerate}

First, we find the measure $\mu$. Let $s$ be such that $\rho(A^{(s)})=1$. Since there exists a $k\geq1$ such that $(A^{(s)})^k$ has strictly positive elements. Then by Perron-Frobenius Theorem, there exists a vector $p=(p_1,\ldots,p_m)^T$ with strictly positive elements such that $A^{(s)}p=p$. Let $P_{i,j}=A^{(s)}_{i,j}\frac{p_j}{p_i}$. Then the matrix $P=(P_{i,j})_{i,j=1}^m$ is a probability matrix, which is aperiodic and recurrent. Thus, there exists a unique probability vector $q=(q_1,\ldots,q_m)$ with positive elements such that $qP=q$. Then for a cylinder set $[i_1,\ldots,i_n]$ let
\begin{equation}\label{eq:markovmeas}
\mu([i_1,\ldots,i_n])=q_{i_1}P_{i_1,i_2}\cdots P_{i_{n-1},i_n}.
\end{equation}
It is easy to see by the definition of Lyapunov exponents in formula
\cite[(8.1)]{BaRaSiKochin1}
that
$$
h(\mu)=-\sum_{i,j}q_iP_{i,j}\log P_{i,j}=-\sum_{i=1}^mq_i\log|\alpha_i|\gamma_i^{-(s-1)}=\chi_2(\mu)+(s-1)\chi_1(\mu).
$$
Moreover, since $\frac{h(\mu)}{\chi_1(\mu)}\leq1< s$ we have $\chi_2(\mu)<\chi_1(\mu)$, and thus, $D(\mu)=s$ by
\cite[Definition~8.2]{BaRaSiKochin1}.

By Proposition~\ref{prop:nbern}, for every $\varepsilon>0$ there exists a sequence of $n$-step Bernoulli measures $\hat{\nu}_n$ and a $N\geq1$ such that for every $n\geq N$
$$
|h(\mu)-h(\hat{\nu}_n)|,|\chi_2(\mu)-\chi_2(\hat{\nu}_n)|,|\chi_1(\mu)-\chi_1(\hat{\nu}_n)|<\varepsilon.
$$
One can choose $\varepsilon<(\chi_1(\mu)-\chi_2(\mu))/100$, so $\chi_2(\hat{\nu}_n)<\chi_1(\hat{\nu}_n)$. Now, we approximate $\hat{\nu}_n$ with a $nm$-step Bernoulli measure $\overline{\nu}_{n,m}$, which is supported on words $\pmb\omega\in(\Sigma_{A,n})^m$ for which $\gamma_{\pmb\omega}^{-1}<\alpha_{\pmb\omega}$. More precisely, let
$$
Y_{m,n}=\{\pmb\omega\in\Sigma_{A,nm}:\hat{\nu}_n(C[\omega])>0\text{ and }\gamma_{\pmb\omega}^{-1}<\alpha_{\pmb\omega}\},
$$
and let $\hat{\nu}_{n,m}$ be the Bernoulli measure on $(Y_{m,n})^\N$ defined with the probabilities $\left(\hat{\nu}_n(C[\pmb\omega])/\hat{\nu}_n(Y_{m,n})\right)_{\pmb\omega\in Y_{m,n}}$, and let $\overline{\nu}_{n,m}$ be the corresponding $nm$-step Bernoulli measure.

By the strong law of large numbers and Egorov's Theorem, for every $\varepsilon>0$ there exists $M=M(n)>0$ such that for every $m\geq M$
$$
|h(\overline{\nu}_{n,m})-h(\hat{\nu}_n)|,|\chi_2(\overline{\nu}_{n,m})-\chi_2(\hat{\nu}_n)|,|\chi_1(\overline{\nu}_{n,m})-\chi_1(\hat{\nu}_n)|<\varepsilon.
$$
Thus, $|s-D(\overline{\nu}_{n,m})|<C\varepsilon$ with some constant $C>0$ independent of $n,m$.

By definition, $\mathrm{supp}(\Pi_*\overline{\nu}_{n,m})\subseteq\mathrm{graph}(G)$. Thus, in order to apply 
\cite[Theorem~6.3]{BaRaSiKochin1}, it is enough to show that there exists $\pmb\omega\neq\pmb\tau\in Y_{m,n}$ such that $D\tilde{F}_{\pmb\omega}$ and $D\tilde{F}_{\pmb\tau}$ are not simultaneously diagonalisable. Let $\ell\geq1$ and $\pmb\omega_1,\pmb\tau_1\in\Sigma_{A,\ell}$ as in \eqref{eq:assum}. Without loss of generality we may assume that $n-2k\gg\ell$. Since the first and last symbols of $\pmb\omega_1,\pmb\tau_1$ are the same, one can choose $\pmb\upsilon_1,\pmb\upsilon_2$ such that
$\hat{\nu}_n(C[\pmb\upsilon_1\pmb\omega_1\pmb\upsilon_2]),\hat{\nu}_n(C[\pmb\upsilon_1\pmb\tau_1\pmb\upsilon_2])>0$. By the strong law of large numbers, for every sufficiently large $m\geq1$ one can find $\pmb\kappa\in\Sigma_{A,n(m-1)}$ such that
$\pmb\upsilon_1\pmb\omega_1\pmb\upsilon_2\pmb\kappa,\pmb\upsilon_1\pmb\tau_1\pmb\upsilon_2\pmb\kappa\in Y_{m,n}$. By definition, $\alpha_{\pmb\upsilon_1\pmb\tau_1\pmb\upsilon_2\pmb\kappa}=\alpha_{\pmb\upsilon_1\pmb\omega_1\pmb\upsilon_2\pmb\kappa}$ and $\gamma_{\pmb\upsilon_1\pmb\tau_1\pmb\upsilon_2\pmb\kappa}=\gamma_{\pmb\upsilon_1\pmb\omega_1\pmb\upsilon_2\pmb\kappa}$. Thus, $D\tilde{F}_{\pmb\upsilon_1\pmb\tau_1\pmb\upsilon_2\pmb\kappa}$ and $D\tilde{F}_{\pmb\upsilon_1\pmb\omega_1\pmb\upsilon_2\pmb\kappa}$ are not simultaneously diagonalisable if and only if $D\tilde{F}_{\pmb\upsilon_1\pmb\tau_1\pmb\upsilon_2\pmb\kappa}\neq D\tilde{F}_{\pmb\upsilon_1\pmb\omega_1\pmb\upsilon_2\pmb\kappa}$. But this is true since $D\tilde{F}_{\pmb\omega_1}\neq D\tilde{F}_{\pmb\tau_1}$. Hence, by 
\cite[Theorem~6.3]{BaRaSiKochin1}
$$
\dim_H\mathrm{graph}(G)\geq\dim_H\Pi_*\overline{\nu}_{n,m}=D(\overline{\nu}_{n,m})\geq s-C\varepsilon.
$$
The statement follows by taking $\varepsilon\to0$.
\end{proof}

\begin{proof}[Proof of Theorem~\ref{thm:markovbox}]
	Since the lower box-counting dimension is always an upper bound for the Hausdorff dimension and the upper box counting dimension is always at most $s$, in point of view of Theorem~\ref{thm:markovhaus}, it is enough to show for diagonal systems. That is, by applying an affine transformation on the dataset $\Delta$, we may assume that $a_i=0$ for every $i=1,\ldots,m$. Since $\Delta$ is not collinear, $G([0,1])$ is an interval $D$ with $|D|>0$. Let $\Sigma_A^{(r)}=\left\{\pmb\omega\in\bigcup_{\ell=1}^\infty\Sigma_{A,\ell}:\gamma_{\pmb\omega}^{-1}\leq r<\gamma_{\pmb\omega|_{|\pmb\omega|-1}}^{-1}\right\}$. There needed at least $\sum_{\pmb\omega\in\Sigma_A^{(r)}}\left\lceil\frac{|D|\cdot\alpha_{\pmb\omega}}{\gamma_{\pmb\omega}^{-1}}\right\rceil$-many squares of side length $r$ to cover $\mathrm{graph}(G)$. By using the measure $\mu$ defined in \eqref{eq:markovmeas},
$$
\sum_{\pmb\omega\in\Sigma_A^{(r)}}\left\lceil\frac{|D|\cdot\alpha_{\pmb\omega}}{\gamma_{\pmb\omega}^{-1}}\right\rceil\geq r^{-s}\sum_{\pmb\omega\in\Sigma_A^{(r)}}\frac{|D|\cdot\alpha_{\pmb\omega}}{\gamma_{\pmb\omega}^{-1}}\gamma_{\pmb\omega}^{-s}\geq r^{-s}C\sum_{\pmb\omega\in\Sigma_A^{(r)}}\mu([\pmb\omega])=r^{-s}C,
$$
where $C=|D|\min_{i,j}p_i/p_j$.
\end{proof}

\section{Continuous Generalized Takagi Functions}

In the previous examples, the base dynamics $f\colon[0,1]\mapsto[0,1]$, was a Markovian expanding, piecewise linear map with Markov partition formed by intervals. For general systems of the form \eqref{eq:interpoldyn}, the base dynamics is not Markovian. However, it is hard to get a graph of a continuous function as a repeller of such systems. Finally, we present here a special case, for which the repeller is a continuous function graph but the base dynamics is non-markovian. This example can be considered as generalised Takagi functions.

Let us recall that the $\alpha$-Takagi function $T_\alpha$ was defined as
$T_{\alpha}(x):=\sum\limits_{n=1}^{\infty }\alpha^n\psi\left(2^n \cdot x\right)$, where we defined $\psi(z)=\mathrm{dist}\left(z,\mathbb{Z}\right)$.

To define a continuous generalization of this family first we fix the two parameters $\alpha\in\left(0,1\right)$ and $\beta\in\left(1,2\right)$ such that
$\alpha \cdot \beta>1$. Then we introduce (see Figure \ref{d87}) the function $B_\beta:[0,1]\to[0,1]$
\begin{equation}\label{d98}
B_\beta(x):=\left\{
\begin{array}{ll}
\beta x, & \hbox{if $x\in\left[0,\frac{1}{2}\right]$;} \\
1-\beta(1-x), & \hbox{if $x\in\left(\frac{1}{2},1\right]$.}
\end{array}
\right.
\end{equation}
This map will be our base dynamics.

\begin{figure}
	\centering
	\includegraphics[width=11cm]{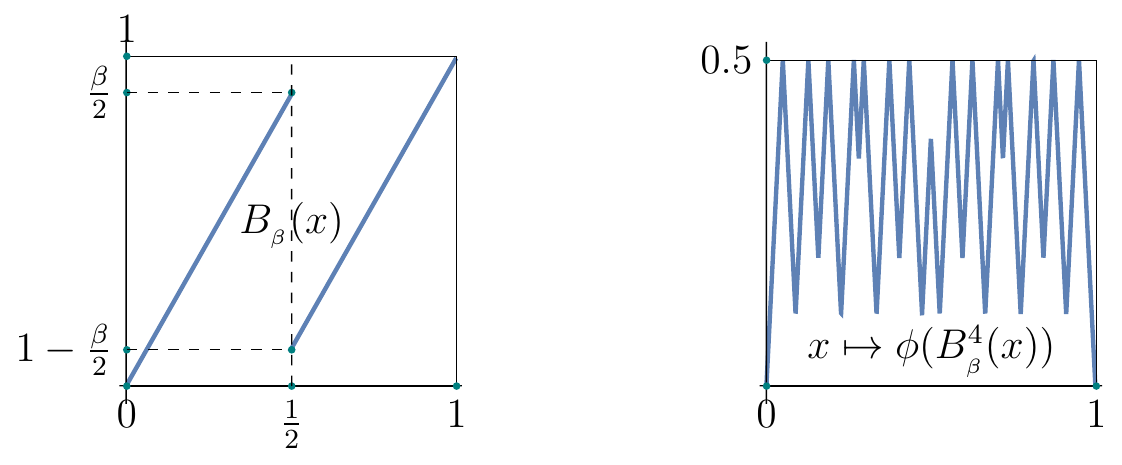}
	\caption{The functions $B_\beta(x)$ and $\psi\left(B_{\beta}^{4}(x)\right)$}\label{d87}
\end{figure}

Now we define the continuous generalized $(\alpha,\beta)$-Takagi function
$ T_{\alpha,\beta}\colon[0,1]\to\mathbb{R}^+$
as
\begin{equation}\label{d86}
T_{\alpha,\beta}(x):= \sum\limits_{k=0}^{\infty }
\alpha^k \cdot \psi\left(B_{\beta}^{n}(x)\right).
\end{equation}

\begin{figure}
	\centering
	\includegraphics[width=12cm]{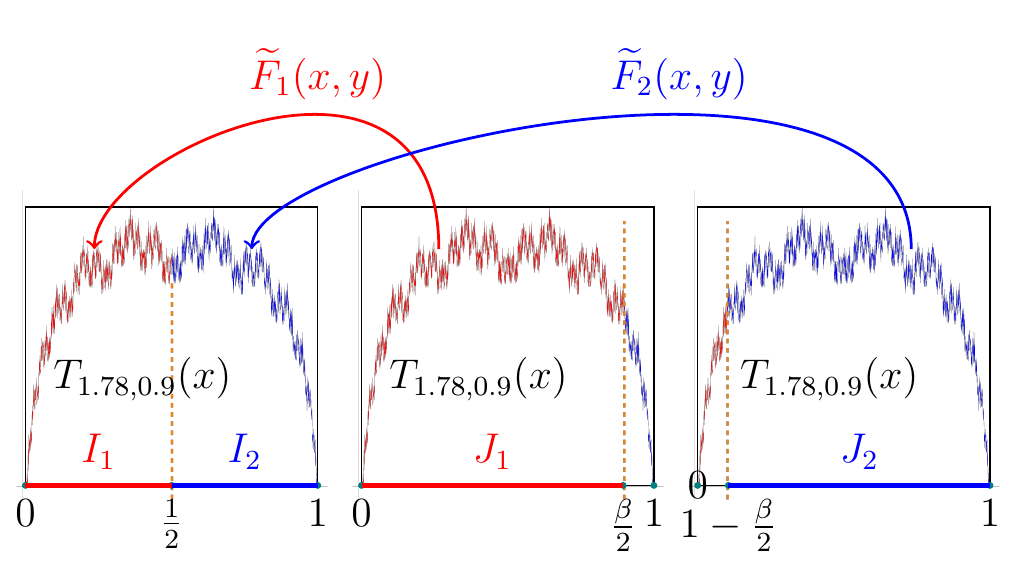}
	\caption{$\mathrm{graph}(T_{1.78,0.9})$ is the union of affine  images of \emph{parts} of  $\mathrm{graph}(T_{1.78,0.9})$}\label{d80}
\end{figure}

The fact that the function $ T_{\alpha,\beta}(x)$ is continuous follows from the
fact that for all $n$ the function $x\mapsto \psi\left(B_{\beta}^{n}(x)\right)$
is continuous (see the right-hand side of Figure \ref{d87}). Indeed, it is easy to see by the symmetry $B_\beta(x)=1-B_\beta(1-x)$ that for a continuous function $g\colon[0,1]\mapsto\R$, which is symmetric to the line $x=1/2$, the map $g\circ B_\beta$ is continuous and symmetric to $x=1/2$.

The graphs of the functions $T_{\alpha,\beta}(x)$ are not self-affine but the graphs of these functions have a less restrictive weakened form of self-affinity. Namely,
we write
\begin{equation}\label{d85}
I_1:=\left[0,\frac{1}{2}\right],\
I_2:=\left[\frac{1}{2},1\right] \mbox{ and }
J_1:=\left[0,\frac{\beta}{2}\right], \
J_2:=\left[1-\frac{\beta}{2}.1\right]
\end{equation}
and
\begin{equation}\label{d83}
\widetilde{I}_\ell :=I_\ell \times[0,M_{\alpha,\beta}] \mbox{ and }
\widetilde{J}_\ell :=J_\ell \times[0,M_{\alpha,\beta}],\quad \ell =1,2,
\end{equation}
where $M_{\alpha,\beta}:=\max\limits_{x\in[0,1]}T_{\alpha,\beta}(x)$. Then
\begin{equation}\label{d84}
\mathrm{Graph}(T_{\alpha,\beta})=
\widetilde{F}_1\left(\mathrm{Graph}(T_{\alpha,\beta})\cap\widetilde{J}_1\right)\bigcup
\widetilde{F}_2\left(\mathrm{Graph}(T_{\alpha,\beta})\cap\widetilde{J}_2\right),
\end{equation}
where
\begin{equation}\label{d82}
\begin{split}
\widetilde{F}_0(x,y)&:=\left(\frac{1}{\beta} \cdot x,
\frac{1}{\beta} \cdot x+\alpha \cdot y
\right)
\mbox{ and }\\
\widetilde{F}_1(x,y)&:=\left(1-\frac{1}{\beta} \cdot (1- x),
\frac{1}{\beta} \cdot (1- x)+\alpha \cdot y
\right).
\end{split}
\end{equation}
The union in \eqref{d84} is almost disjoint, the intersection is the only point of
$\mathrm{graph}(T_{\alpha,\beta})$ which lies on the vertical line $x=\frac{1}{2}$. This follows from the fact that
\begin{equation}\label{d81}
\mathrm{graph}(T_{\alpha,\beta})\cap\widetilde{I}_\ell =
\widetilde{F}_\ell \left(\mathrm{Graph}(T_{\alpha,\beta})\cap\widetilde{J}_\ell \right), \quad \ell =1,2.
\end{equation}
(See Figure \ref{d80}.)
If we compare this function graph with the graph of the self affine Takagi map $T_{3/2}$ (see on the right-hand side of the Figure
\ref{fig:takwei}) then we can see the difference. Namely, in the case of
$T_{3/2}$, both the left- and right-hand sides of $\mathrm{graph}(T_\alpha)$
are affine images of the whole graph  $\mathrm{graph}(T_\alpha)$.
As opposed to that
in the case of $T_{\alpha,\beta}$ the left and the right hand sides:
$\mathrm{graph}(T_{\alpha,\beta})\cap \widetilde{I}_1$ and
$\mathrm{graph}(T_{\alpha,\beta})\cap \widetilde{I}_2$ are affine images of certain \emph{parts} of $\mathrm{graph}(T_{\alpha,\beta})$ and not the whole one. That is why the family of $T_{\alpha,\beta}$ is much more general.

\begin{theorem}
	For every value of $\alpha$ and $1<\beta\leq 2$ such that $\alpha\cdot\beta>1$
	$$
	\dim_H\mathrm{graph}(T_{\alpha,\beta})=\dim_B\mathrm{graph}(T_{\alpha,\beta})=2+\frac{\log\alpha}{\log\beta}.
	$$
\end{theorem}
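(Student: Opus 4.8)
The plan is to prove the two one-sided bounds $\overline{\dim}_B\mathrm{graph}(T_{\alpha,\beta})\le s$ and $\dim_H\mathrm{graph}(T_{\alpha,\beta})\ge s$ for $s:=2+\frac{\log\alpha}{\log\beta}$; since $\dim_H\le\underline{\dim}_B\le\overline{\dim}_B$ always holds, these pinch every dimension to $s$ (note $1<s<2$ precisely because $\alpha\beta>1$ and $\alpha<1$). Throughout I would exploit the weak self-affinity \eqref{d84}--\eqref{d81}: $\mathrm{graph}(T_{\alpha,\beta})$ is covered by the affine images $\widetilde{F}_{\pmb\omega}$ of graph pieces along admissible itineraries $\pmb\omega$ of the base map $B_\beta$, where each elementary map has lower-triangular linear part $M_\ell$ with diagonal $(1/\beta,\alpha)$ and off-diagonal entry $\pm 1/\beta$.

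First I would establish $\overline{\dim}_B\le s$ by a direct covering count. From the recursion $T_{\alpha,\beta}(x)=\psi(x)+\alpha\,T_{\alpha,\beta}(B_\beta x)$, which follows from \eqref{d86}, one sees that $T_{\alpha,\beta}$ is H\"older of exponent $-\log\alpha/\log\beta\in(0,1)$, so a level-$N$ piece $\widetilde{F}_{\pmb\omega}(\mathrm{graph}\cap\widetilde{J})$ with $|\pmb\omega|=N$ sits in a parallelogram of horizontal width $\beta^{-N}$ and vertical extent $\lesssim\alpha^{N}$. Since $\alpha>1/\beta$, covering such a piece by squares of side $r=\beta^{-N}$ costs $\approx(\alpha\beta)^{N}$ squares. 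The number of admissible length-$N$ itineraries equals the lap number of $B_\beta^{N}$, which grows like $\beta^{N}$ because $B_\beta$ has constant slope $\beta$ and hence $h_{\mathrm{top}}(B_\beta)=\log\beta$ (Misiurewicz--Szlenk, or Rokhlin applied to its absolutely continuous invariant measure). Multiplying, $\mathrm{graph}(T_{\alpha,\beta})$ is covered by $\lesssim\beta^{N}(\alpha\beta)^{N}=(\alpha\beta^{2})^{N}$ squares of side $\beta^{-N}$, whence $\overline{\dim}_B\le\frac{\log(\alpha\beta^{2})}{\log\beta}=s$.

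For the matching lower bound I would produce, for every large $N$, a genuine finite self-affine IFS sitting inside $\mathrm{graph}(T_{\alpha,\beta})$ whose affinity dimension tends to $s$, and invoke the machinery already used for Theorem~\ref{thm:markovhaus}. Concretely, I would select a family $\mathcal W_N$ of admissible length-$N$ itineraries, padded at both ends (the $\hat{\pmb\omega}=\pmb{p}(\omega_1)\pmb\omega\,\pmb{s}(\omega_N)$ device of Proposition~\ref{prop:nbern}), so that the compositions $\widetilde{F}_{\pmb\omega}$, $\pmb\omega\in\mathcal W_N$, form an IFS on a common graph piece with images lying over pairwise disjoint $x$-intervals. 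Because each image is a graph over its own $x$-interval, the images are automatically disjoint, so the strong --- hence exponential --- separation condition holds for free. The linear parts have singular values $\alpha^{N}$ and $\beta^{-N}$; since $M_1,M_2$ do not commute (they share the eigendirection $(0,1)$ of the eigenvalue $\alpha$ but have different eigendirections for $1/\beta$), I can include two words built from the blocks $12$ and $21$ with identical contraction data $(\alpha^{N},\beta^{-N})$ but $D\widetilde{F}_{\pmb\omega}\neq D\widetilde{F}_{\pmb\tau}$, which is exactly the non--simultaneous--diagonalisability hypothesis \eqref{eq:assum}/\eqref{eq:condforHD}. Applying \cite[Theorem~6.3]{BaRaSiKochin1} as in the proof of Theorem~\ref{thm:markovhaus}, the attractor $E_N\subseteq\mathrm{graph}(T_{\alpha,\beta})$ has $\dim_H E_N=d_N$, the root of $|\mathcal W_N|\,\alpha^{N}\beta^{-(d_N-1)N}=1$. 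Since $|\mathcal W_N|\ge\beta^{N(1-o(1))}$ (again by $h_{\mathrm{top}}(B_\beta)=\log\beta$, the padding costing only a bounded factor), solving gives $d_N\to s$, so $\dim_H\mathrm{graph}(T_{\alpha,\beta})\ge\sup_N d_N=s$.

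The hard part will be exactly the non-Markovian nature of $B_\beta$: because $J_1=[0,\beta/2]$ and $J_2=[1-\beta/2,1]$ overlap, the admissible itineraries do not form a subshift of finite type, so neither Proposition~\ref{prop:nbern} nor Theorem~\ref{thm:markovhaus} applies verbatim. The genuinely non-routine point is therefore the combinatorial construction in the lower bound: extracting from this overlapping coding a finite, end-matched, separated family of $\beta^{N(1-o(1))}$ words that still carries a transversality pair. I expect this to rest on a specification-type flexibility of $B_\beta$ --- the overlap region $[1-\beta/2,\beta/2]$ allows one to concatenate essentially any two branches, so admissible words are plentiful --- together with the lap-number bound $\ell(B_\beta^{N})\ge c\,\beta^{N}$. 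Equivalently, one may replace $\mathcal W_N$ by a subshift of finite type (a horseshoe) carried by $B_\beta$ with entropy $>\log\beta-\varepsilon$, reducing literally to the Markovian Theorem~\ref{thm:markovhaus}; verifying that such horseshoes exist with entropy arbitrarily close to $\log\beta$ while keeping the data non-collinear is the crux of the argument.
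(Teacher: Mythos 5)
Your two bounds follow exactly the paper's own strategy: the upper bound is the same covering count (parallelogram cylinders of horizontal size $\beta^{-n}$ and vertical size $\approx\alpha^{n}$, multiplied by $\sharp\Sigma_\beta^{(n)}\approx\beta^{n}$ admissible itineraries via $h_{top}(B_\beta)=\log\beta$), and the lower bound is the same reduction to Markovian subsystems of $B_\beta$ with entropy exceeding $\log\beta-\varepsilon$. The step you correctly isolate as the crux --- existence of such high-entropy Markov subsets for the non-Markovian map $B_\beta$ --- is precisely the point the paper does not prove from scratch either: it is stated as a lemma and quoted from Hofbauer, Raith and Simon \cite[Proposition~1(a),(b),(c) and Lemma~2]{hofraithsim}. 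So your plan is completable, but as written this ingredient is missing: appealing to ``specification-type flexibility'' of the overlap $[1-\beta/2,\beta/2]$ is a heuristic, not a construction, and producing subshifts of finite type inside the itinerary space with entropy arbitrarily close to $\log\beta$ genuinely requires a Hofbauer-type argument (or the citation).

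Two further discrepancies in the lower bound deserve mention. First, the reduction is not ``literal'' to Theorem~\ref{thm:markovhaus}: the Markov subset $\mathcal{B}_m$ is a Cantor set, so the restricted repeller is not a fractal interpolation function over intervals, and the paper instead re-runs the machinery for the subsystem --- it builds the matrix $A^{(s),m}$, shows its root $s_m>2+\frac{\log\alpha}{\log\beta}-\varepsilon$, constructs a Markov measure $\mu_m$ with $D(\mu_m)=s_m$, approximates it by $n$-step Bernoulli measures via Proposition~\ref{prop:nbern}, and then applies \cite[Theorem~7.6]{BaRaSiKochin1} rather than \cite[Theorem~6.3]{BaRaSiKochin1}; in particular it never needs to exhibit your transversality pair. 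Your alternative route through Theorem~6.3 is legitimate: the linear parts in \eqref{d82} satisfy $D\widetilde{F}_0D\widetilde{F}_1\neq D\widetilde{F}_1D\widetilde{F}_0$ exactly when $\alpha\neq1/\beta$, which holds since $\alpha\beta>1$, so a pair of words containing the blocks $12$ and $21$ with matched endpoints does the job --- but you must additionally check that such a pair survives inside the chosen Markov subsystem, a point your sketch leaves open. Your dimension bookkeeping ($d_N\to s$ once $|\mathcal{W}_N|\geq\beta^{N(1-o(1))}$) agrees with the paper's computation, so modulo these two verifications the argument is sound.
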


In order to calculate $\dim_H\mathrm{graph}(T_{\alpha,\beta})$, we give the upper bound by using natural covers and for the lower bound we find "large enough" Markovian subsystems of $B_\beta$. The set of admissible sequences is $$\Sigma_\beta=\{(i_1,i_2,\ldots):\exists x\in[0,1]\text{ such that }B_\beta^n(x)\in I_{i_n}\text{ for every }n\geq1\}.$$ Since the base system $B_\beta$ is not Markovian for a general value of $\beta$, the set of admissible sequences cannot be generated by an adjacency matrix. By Rokhlin's formula, see \cite{Rohlinent, Ledent}, $\lim_{n\to\infty}\frac{1}{n}\log\sharp\Sigma_\beta^{(n)}=\log\beta$, where $\Sigma_\beta^{(n)}=\{(i_1,\ldots,i_n):\exists\jj\in\Sigma_\beta\text{ such that }j_k=i_k\text{ for }k\geq1\}$.

For each ${\pmb\omega}\in\Sigma_\beta^{(n)}$, let us define the cylinder sets by induction. Namely, for $n=1$ let $\mathcal{C}_\omega=\tilde{F}_\omega(\widetilde{J}_\omega)$ the cylinder set corresponding to $\omega\in\Sigma_\beta^{(1)}$. For $n>1$ and $\pmb\omega\in\Sigma_\beta^{(n)}$, let  $\mathcal{C}_{\pmb\omega}=\tilde{F}_{\omega_1}(\mathcal{C}_{\sigma\pmb\omega}\cap\widetilde{J}_{\omega_1})$, where $\sigma{\pmb\omega}$ is the word of lenght $n-1$ by deleting the first symbol of $\pmb\omega$. For each $\pmb\omega\in\Sigma_\beta^{(n)}$, the set $\mathcal{C}_{\omega}$ is a parallelogram with height parallel to the $x$-axis is at most $\beta^{-n}$ and side length parallel to the $y$-axis is $\alpha^{n}M_{\alpha,\beta}$. Since $\alpha\beta>1$ we get that the tangent of the angle between the sides is uniformly bounded, denote the bound by $C$. Thus,  $\mathrm{graph}(T_{\alpha,\beta})$ can be covered by at most $\sharp\Sigma_\beta^{(n)}\cdot (M_{\alpha,\beta}(\alpha\beta)^{n}+C)$-many squares of sidelength $\beta^{-n}$. This shows that
$$
\overline{\dim}_B\mathrm{graph}(T_{\alpha,\beta})\leq2+\frac{\log\alpha}{\log\beta}.
$$


Now, we introduce the Markovian subsystems of $B_\beta$. A compact $B_\beta$-invariant set $\mathcal{B}$ is called \emph{Markov subset} if there exists a finite collection $\DD$ of closed intervals such that for every $\mathfrak{I}_1,\mathfrak{I}_2\in\DD$.
\begin{enumerate}
	\item $\mathfrak{I}_1\subseteq I_1$ or $\mathfrak{I}_1\subseteq I_2$,
	\item $\mathfrak{I}_1^o\cap\mathfrak{I}_2^o=\emptyset$ if $\mathfrak{I}_1\neq\mathfrak{I}_2$,
	\item $\bigcup_{\mathfrak{I}\in\DD}\cap\mathcal{B}=\mathcal{B}$,
	\item either $B_\beta(\mathfrak{I}_1\cap\mathcal{B})\cap\mathfrak{I}_2\cap\mathcal{B}=\emptyset$ or $\mathfrak{I}_2\cap\mathcal{B}\subseteq\mathcal{B}_\beta(\mathfrak{I}_1\cap\mathcal{B})$
\end{enumerate}
We call $\DD$ the Markov partition of $\mathcal{B}$. Now we show that there exist a sequence of Markov subsystems, which topologycal entropy approximates $\log\beta$ arbitrarily.

\begin{lemma}
	For every $\varepsilon>0$ there exists $m\geq1$, a Markov subset $\mathcal{B}_m\subset[0,1]$ and $\DD_m$ Markov partition such that
		$$h_{top}(B_\beta|_{\mathcal{B}_m})>h_{top}(B_\beta)-\varepsilon.$$
		Moreover, we can assume that there exist intervals in $\DD_m$ which contain $0$ and $1$.
\end{lemma}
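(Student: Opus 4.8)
The plan is to build, for each $\varepsilon>0$, a \emph{finite} Markov subset by aligning the partition endpoints with a truncated piece of the orbit of the break point $\tfrac12$, and then to show that the only entropy lost is that carried by the orbits that pass arbitrarily close to $\tfrac12$. Since the Rokhlin computation quoted above gives $h_{top}(B_\beta)=\log\beta$, it suffices to produce a Markov subset $\mathcal{B}_m$ with $h_{top}(B_\beta|_{\mathcal{B}_m})>\log\beta-\varepsilon$ and with $0$ and $1$ lying in partition intervals. The obstruction to $B_\beta$ being Markov is localised entirely at $\tfrac12$: on each of $I_1,I_2$ the map is affine with slope $\beta$ and the endpoints $0,1$ are fixed, while the break point has the two one-sided images $\tfrac\beta2=B_\beta(\tfrac12^-)$ and $1-\tfrac\beta2=B_\beta(\tfrac12^+)$, whose forward orbits fail to be eventually periodic for a generic $\beta$.

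First I would fix a small $\delta>0$ and, using that the transitive expanding map $B_\beta$ has dense preimages of the fixed points $0,1$, choose \emph{preperiodic} endpoints $a\in(\tfrac12-\delta,\tfrac12)$ and $b\in(\tfrac12,\tfrac12+\delta)$ whose finite forward orbits avoid the open gap $(a,b)$. Setting $P=\{0,1\}\cup\mathrm{orb}(a)\cup\mathrm{orb}(b)$, the points of $P$ cut $[0,a]$ and $[b,1]$ into finitely many closed intervals forming a collection $\DD_m$; since $a<\tfrac12<b$ the removed gap straddles the break point, so every element of $\DD_m$ lies in $I_1$ or in $I_2$. Because $B_\beta(P)\subseteq P$, the image of each $\mathfrak{I}\in\DD_m$ is a union of elements of $\DD_m$ together with, possibly, a subinterval of the removed gap $(a,b)$. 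Taking $\mathcal{B}_m$ to be the maximal $B_\beta$-invariant subset of $\bigcup\DD_m=[0,a]\cup[b,1]$ then yields exactly the Markov property~(4), and $\mathcal{B}_m$ is precisely the set of orbits of $B_\beta$ that never enter $(a,b)$. The two extreme intervals containing the fixed points $0,1$ survive, which gives the ``moreover'' clause.

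For the entropy I would exploit the uniform expansion. Writing $T$ for the $0$--$1$ transition matrix of $(\mathcal{B}_m,\DD_m)$ and $\ell=(\ell_{\mathfrak{I}})_{\mathfrak{I}\in\DD_m}$ for the vector of interval lengths, the identity $|B_\beta(\mathfrak{I})|=\beta\,\ell_{\mathfrak{I}}$ gives $(T\ell)_{\mathfrak{I}}=\beta\,\ell_{\mathfrak{I}}-r_{\mathfrak{I}}$, where the defect $r_{\mathfrak{I}}=|B_\beta(\mathfrak{I})\cap(a,b)|\ge0$ is nonzero only for the (at most two) intervals whose image crosses the gap, each such defect being at most $b-a\le 2\delta$. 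In particular $T\ell\le\beta\ell$, so $\rho(T)\le\beta$ and $h_{top}(B_\beta|_{\mathcal{B}_m})=\log\rho(T)\le\log\beta$, consistent with $h_{top}(B_\beta)=\log\beta$. The content is the matching lower bound $\rho(T)\ge\beta-o_\delta(1)$, i.e. that deleting a small neighbourhood of the single bad point $\tfrac12$ costs negligibly much entropy.

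The main obstacle is precisely this lower bound, which is an \emph{escape-rate} statement: the number of admissible words of length $n$ in $\mathcal{B}_m$ must be shown to be at least $(\beta-o_\delta(1))^n$. The naive estimate, bounding the level-$n$ cylinders of $B_\beta$ killed before time $n$ by summing the contributions of the separate times $0,\ldots,n-1$, only loses an \emph{additive} factor growing like $n$ and is therefore useless; the killing must instead be shown to be \emph{multiplicative}, so that the survivor count decays like $\bigl(\beta(1-c\delta)\bigr)^{n}$ with $c$ independent of $n$. I would obtain this through the transfer operator of $B_\beta$ punctured on $U_\delta=(\tfrac12-\delta,\tfrac12+\delta)$, equivalently a Perron--Frobenius analysis of $T$: the punctured operator has a spectral gap and its leading eigenvalue tends to $\beta$ as $\delta\to0$, since $U_\delta$ carries mass $O(\delta)$ for the measure of maximal entropy and the map has constant slope $\beta$. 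This is the only step that genuinely uses the one-dimensionality and the uniform expansion; an alternative would be to invoke the general fact that for transitive piecewise-monotone interval maps the topological entropy is approximated from below by entropies of Markov subsystems via Hofbauer's tower, but the constructive route above is what makes the normalisation ``$0,1\in\bigcup\DD_m$'' transparent. Choosing $\delta$ small enough that $o_\delta(1)<\varepsilon$ then finishes the proof.
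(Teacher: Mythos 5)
Your overall strategy (delete a small gap $(a,b)\ni\tfrac12$ with a finite, forward-invariant set of partition endpoints, take the survivor set, and show that the entropy lost is $o_\delta(1)$) is a legitimate route, and it is genuinely different from the paper, whose entire ``proof'' of this lemma is a citation of Hofbauer--Raith--Simon. But as written the proposal has two genuine gaps. The first is concrete and breaks the construction step: for $\beta\in(1,2)$ the fixed points $0$ and $1$ have \emph{no preimages other than themselves}. Indeed, $0\in B_\beta(I_2)$ would require $1-\beta/2\le 0$, i.e.\ $\beta\ge 2$, while solving $\beta x=0$ on $I_1$ gives only $x=0$ (and symmetrically for $1$). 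Worse, the core $J=[1-\beta/2,\beta/2]$ is forward invariant --- $B_\beta(J\cap I_1)=[\beta-\beta^2/2,\ \beta/2]\subseteq J$ because $(\beta-1)(1-\beta/2)\ge 0$, and symmetrically for $J\cap I_2$ --- and $J$ contains a neighbourhood of $\tfrac12$ while staying at distance $1-\beta/2>0$ from $\{0,1\}$. Hence no point near $\tfrac12$ ever maps to $0$ or $1$: the ``dense preimages of the fixed points'' you invoke do not exist, and your endpoints $a,b$ cannot be chosen this way. Any repair must take $a,b$ to be preperiodic points of the core dynamics (e.g.\ built from truncated kneading sequences of $\tfrac12^{\pm}$), and one must then still resolve the self-referential requirement that $\mathrm{orb}(a)\cup\mathrm{orb}(b)$ avoid the gap $(a,b)$, which is defined in terms of $a$ and $b$ themselves; the proposal does not address this circularity.

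The second gap is the entropy lower bound, which you yourself single out as ``the main obstacle'' but in the end only assert. The claims that the punctured (counting) transfer operator has a spectral gap, that its leading eigenvalue tends to $\beta$ as $\delta\to 0$, and that this eigenvalue bounds from below the topological entropy of the survivor set, are --- after your Markov reduction --- essentially equivalent to the lemma itself. None of them is proved, and each requires real machinery: Keller--Liverani-type perturbation theory on BV, plus a variational principle or counting argument relating open-system pressure to the entropy of the survivor set. Note also that $B_\beta$ is never topologically transitive on $[0,1]$ (all interesting dynamics lives in the invariant core $J$), so statements about ``the'' measure of maximal entropy and the spectral picture need to be formulated on the core. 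This missing analytic step is precisely the content of the results of Hofbauer, Raith and Simon (Proposition~1(a)--(c) and Lemma~2 of \cite{hofraithsim}) that the paper cites; so, as it stands, your argument replaces that citation by an unproven restatement of it rather than by a proof.
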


The claim follows from Hofbauer, Raith and Simon \cite[Proposition~1(a),(b),(c) and Lemma~2]{hofraithsim}.

Similarly to \eqref{eq:smatrix}, we define a matrix $A^{(s),m}$ for every $m\geq1$, which gives the dimension of $\mathrm{graph}(T_{\alpha,\beta}|_{\mathcal{B}_m})$. Namely, let $A^{(s),m}$ be a $\#\DD_m\times\#\DD_m$ matrix such that
$$
A^{(s),m}_{\mathfrak{I},\mathfrak{J}}=\begin{cases}\alpha\beta^{-(s-1)} & \text{if }\mathfrak{J}\cap\mathcal{B}_m\subseteq B_{\beta}(\mathfrak{I}\cap\mathcal{B}_m)\text{ for }\mathfrak{I},\mathfrak{J}\in\DD_m \\
0 & \text{otherwise}.\end{cases}
$$

Let $s_m$ be such that $\rho(A^{(s_m),m})=1$. For, $\mathfrak{I},\mathfrak{J}\in\DD_m$, let
\begin{multline*}
\mathfrak{I}\stackrel{n}{\rightarrow}\mathfrak{J}=\left\{(\mathfrak{I}_1,\ldots,\mathfrak{I}_{n}):\right.\\
\left.\mathfrak{I}_j\in\DD_m,\ \mathfrak{I}_1=\mathfrak{I}, \mathfrak{I}_n=\mathfrak{J},\ B_\beta(\mathfrak{I}_j\cap\mathcal{B}_m)\supseteq\mathfrak{I}_{j+1}\cap\mathcal{B}_m\text{ for }1\leq j\leq n-1\right\}.
\end{multline*}
By definition,
$$
h_{top}(B_\beta|_{\mathcal{B}_m})=\lim_{n\to\infty}\frac{\log\#\bigcup_{\mathfrak{I},\mathfrak{J}\in\DD_m}\mathfrak{I}\stackrel{n}{\rightarrow}\mathfrak{J}}{n}.
$$
But for every $k\geq 1$, and $\mathfrak{I},\mathfrak{J}\in\DD_m$,
$$
\left(\left(A^{(s_m)}\right)^k\right)_{\mathfrak{I},\mathfrak{J}}=\left(\alpha\beta^{-(s_m-1)}\right)^k\cdot\#(\mathfrak{I}\stackrel{n}{\rightarrow}\mathfrak{J}).
$$
Hence,
$$
\log\beta-\varepsilon<h_{top}(B_\beta|_{\mathcal{B}_m})=\lim_{k\to\infty}\frac{\log\frac{\|\left(A^{(s_m)}\right)^k\|_1}{\left(\alpha\beta^{-(s_m-1)}\right)^k}}{k}=-\log\left(\alpha\beta^{-(s_m-1)}\right),
$$
which implies that $s_m>2+\frac{\log\alpha}{\log\beta}-\varepsilon.$ One can decompose $\DD_m$ into recurrent and transient classes. It is easy to see that there exists a recurrent class $R$ such that restricting $A^{(s_m),m}$ for $R$, $\rho(A^{(s_m),m}|_R)=1$. Denote the Markov subset of $\mathcal{B}_m$ restricted to $R$ by $\mathcal{R}_m$. Similarly to \eqref{eq:markovmeas}, there exists a Markov measure $\mu_m$ such that $D(\mu_m)=s_m$. By Proposition~\ref{prop:nbern}, for every $\varepsilon>0$ there exists an $n$-step Bernoulli measure $\nu_{n,m}$ such that $D(\nu_{n,m})>s_m-\varepsilon$. By 
\cite[Theorem~7.6]{BaRaSiKochin1}
, $\dim_H\Pi_*\nu_{n,m}=D(\nu_{n,m})$, which gives the lower bound.

\bibliographystyle{plain}
\bibliography{barnsley_08_08_bb}

\end{document}